\theoremstyle{plain} %default
\newtheorem{thm}{Theorem}
\newtheorem{lem}[thm]{Lemma}
\newtheorem{cor}{Corollary}
\theoremstyle{definition}
\newtheorem{defn}{Definition}
\theoremstyle{remark}
\newtheorem*{rem}{Remark}
\renewcommand{\red}[1]{\textcolor{red}{#1}}
\newcommand{\purple}[1]{\textcolor{purple}{#1}}
\newcommand{\R}{\mathbb{R}}
\begin{document}

\title{Diameter, Area, and Mean Curvature}
%\date{\today}
\author{Gregory R. Chambers and Jared Marx-Kuo}
\maketitle

\begin{abstract}
In this note, we extend diameter bounds of Simon, Topping, and Wu--Zheng \cite{simon1993existence, Top, wu2011relating} to submanifolds with boundary and (potentially non-compact) ambient manifolds with minor curvature restrictions. The bound is dependent on both an integral of mean curvature and the area of the manifold. We apply our diameter bounds to minimal, constant mean curvature, and prescribed mean curvature surfaces arising in min-max constructions.
\end{abstract}

%\section*{To do}
%\begin{itemize}
%\item Look over

%\item (\blue{done}) I think it would be interesting to show that 
%\[
%\int_M |H|^{m-1} + \textrm{Vol}(M)
%\]
%diverges in the case of $M$ having infinite diameter (i.e. non-compact). This would include the gabriel's horn setting in our proof then as well
%\item Simon actually has a monotonicity formula for varifolds with bounded/$L^p$ mean curvature, see \href{https://web.stanford.edu/class/math285/ts-gmt.pdf}{3.17 and 4.8}. \red{Make sure our theorem isn't redundant based off of this}

%  \item \red{There was something about the argument with boundary, maybe the first covering we add near the boundary adds  finite amount, but I'm not sure it can be looped into the integrals that clearly} \blue{I think we can make this work by adjusting the constants slightly.  If most of the geodesic (at least a 10th) is covered by balls which satisfy the mean curvature dichotomy, then we are done.  If we can find one large ball which satisfies the volume dichotomy, then we are done (since our estimate really just uses a single ball).  If neither of these are true, then we can find a ball whose diameter is equal to the length of the geodesic and which satisfies the volume dichotomy.  This would give us the power of 1/m bound - but what if the volume of the manifold is less than 1? Perhaps the bound should be volume + volume to the 1/m?  Is this silly?}
%
%\end{itemize}

\section{Introduction}
In his seminal work, Simon \cite{simon1993existence} demonstrated that the extrinsic diameter of a closed surface, $M^2 \subseteq \mathbb{R}^3$, has boundable diameter in terms of the area and an integral of mean curvature:
\begin{equation}
d_{ext}(M) \leq \frac{2}{\pi} A(M)^{1/2} \left( \int_M H^2 \right)^{1/2}
\end{equation}
where $d_{ext}$ is the extrinsic diameter. Topping \cite{Top} upgraded this to a bound on \textit{intrinsic} diameter for closed submanifolds $M^m \subseteq \R^n$ as
\begin{equation} \label{toppingBound}
d_{int}(M) \leq C(m) \int_M |H|^{m-1}
\end{equation}
This was improved upon by Wu--Zheng \cite{wu2011relating} to non-compact manifolds with bounded sectional curvature, though with restrictions on the area of the submanifold. We also mention work of Gadgil--Seshadri \cite{gadgil2011surfaces}, Menne--Scharrer \cite{menne2017novel}, Miura \cite{miura2020diameter} who also prove related diameter bounds. \newline
\indent In this note, we aim to relax the type of diameter bound with the added benefit of proving diameter estimates for a larger class of submanifolds. This includes submanifolds with arbitrarily large volume, submanifolds and ambient manifolds with boundary, and ambient manifolds without uniform sectional curvature bounds (see Corollary~\ref*{finiteDiamCor}).
\begin{thm}
\label{thm:main}
Suppose that $M$ is an $m$-dimensional Riemannian manifold without boundary, $N$ is an $n$-dimensional Riemannian manifold, and
$M \rightarrow N$ is an isometric immersion.  We make the following assumptions on $N$:
\begin{enumerate}
\item The sectional curvature of $N$ is bounded above, that is, $K_N \leq k_0$.
\item The injectivity radius of $N$ is bounded below, that is $\textrm{Inj Rad}(N) \geq r_0 > 0$.
\end{enumerate}
Then the intrinsic diameter of $M$, denoted by $d_{int}(M)$, satisfies
\begin{align} \label{ourBound}
 d_{int}(M) &\leq c(m)\int_M |H|^{m-1} d \mu + \frac{r_0}{\overline{V}_{0}}\textrm{Vol}(M) \biggl) \\ \nonumber
 & \leq C(r_0,k_0,m) \biggl( \int_M |H|^{m-1} d \mu + \textrm{Vol}(M) \biggl) 
\end{align}
%Added more detail in the first line, also removed the constant dependence on $b$. I guess sectional curvature bounds inform our bound on injectivity radius, but it doesn't actually factor into the final bound? Unclear
%
%     (\red{Might rephrase as 
%     \[
%     d_{int} \leq C \int_M (|H|^{m-1} + 1)
%     \]}
%\textcolor{red}{Should this also have $\min(Vol(M), Vol(M)^{1/m})$}
where $H$ is the mean curvature vector of $M$ in $N$ and $\overline{V}_{0} = \overline{V}_0(m, r_0, k_0) > 0$ is a volume constraint depending on the geometry of $N$.
\end{thm}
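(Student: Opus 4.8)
The plan is to adapt Topping's covering-and-induction argument to the Riemannian setting, using the curvature and injectivity radius bounds to replace the Euclidean monotonicity formula. The core of Topping's proof is a decomposition lemma: given a point $p \in M$ and a radius $\rho$, either the intrinsic ball $B^M_\rho(p)$ already has diameter controlled, or there is a large portion of mean curvature integral "visible" at scale $\rho$. Concretely, Topping shows that for any connected submanifold and any $p$, one can find a ladder of nested intrinsic balls on which the mean curvature integral accumulates in a quantitative way, and summing these contributions bounds the diameter. The obstruction to running this verbatim in $N$ is that the monotonicity formula for the area ratio $\mu(B_\rho)/\rho^m$ is only approximately monotone, and only on scales below the injectivity radius where comparison geometry applies.

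**First I would** fix a scale $\rho_0 = \rho_0(r_0, k_0, m) > 0$, small enough that on every geodesic ball of radius $\rho_0$ in $N$ one has two-sided control: by Rauch comparison (using $K_N \le k_0$) the exponential map does not contract lengths too much, and by the injectivity radius bound geodesics of length $\le \rho_0$ are minimizing and unique. On such balls one gets an \emph{almost-monotonicity} statement: there is a constant $\overline{V}_0 = \overline{V}_0(m, r_0, k_0)$ such that if $B^M_\rho(p)$ maps into a geodesic ball of $N$ of radius $\le \rho_0$ and the mean curvature integral over it is small, then $\mu(B^M_\rho(p)) \ge \overline{V}_0 \rho^m$ — this is the monotonicity-formula input, obtained by integrating $\frac{d}{d\rho}\big(\rho^{-m}\mu(B_\rho)\big)$ and absorbing the curvature error terms (which are $O(k_0 \rho^2)$) and the first-variation term involving $|H|$.

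**Then** I would run the following dichotomy along a minimizing geodesic in $M$ realizing (most of) the diameter. Partition this geodesic into segments; for each segment centered at $p_i$, look at the intrinsic ball of radius $\min(\rho_0, \text{something})$. Either $\int_{B^M} |H|^{m-1}\,d\mu \ge \epsilon_0$ for a fixed threshold, in which case that segment's length is charged against a definite chunk of the mean curvature integral (and disjointness of alternate balls bounds how many such segments occur by $c(m)\int_M |H|^{m-1}$); or the integral is small, in which case almost-monotonicity forces $\mu(B^M) \ge \overline{V}_0 \rho_0^m$, so by disjointness the number of such segments is at most $\mathrm{Vol}(M)/(\overline{V}_0 \rho_0^m)$, contributing at most $\frac{r_0}{\overline{V}_0}\mathrm{Vol}(M)$ (up to constants, after choosing $\rho_0$ comparably to $r_0$) to the diameter. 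Summing the two types of contributions gives exactly the claimed bound, with $c(m)$ from the first term and $C(r_0, k_0, m)$ absorbing everything in the second line.

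**The hard part** will be the almost-monotonicity estimate in the non-compact, curvature-bounded-above setting: unlike the $\mathbb{R}^n$ case there is no clean monotonicity, so one must carefully track the curvature error in the first-variation/coarea computation and verify it can be made subordinate to the main terms by shrinking $\rho_0$ — and one must ensure the argument never leaves the region where comparison geometry is valid, which is precisely what the injectivity radius hypothesis buys. A secondary subtlety is the bookkeeping when $M$ is noncompact or the diameter is infinite a priori: one argues on exhausting connected pieces and takes limits, using that the right-hand side is finite by hypothesis (implicitly, since otherwise the statement is vacuous).
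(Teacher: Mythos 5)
Your overall architecture matches the paper's (which follows Topping): cover a minimizing geodesic by balls satisfying a dichotomy, extract a disjoint Vitali subfamily, and charge mean-curvature-type balls to $\int_M|H|^{m-1}$ and volume-type balls to $\mathrm{Vol}(M)/\overline{V}_0$. However, the dichotomy you propose is not correct as stated, and this is a genuine gap rather than a bookkeeping issue. At a \emph{fixed} scale $\rho_0=\rho_0(r_0,k_0,m)$, the alternative ``either $\int_{B^M_{\rho_0}(p)}|H|^{m-1}\ge\epsilon_0$ or $\mu(B^M_{\rho_0}(p))\ge\overline{V}_0\rho_0^m$'' fails: take $M$ a round sphere of radius $\varepsilon\to 0$ in $\mathbb{R}^3$, for which $\int_M|H|^{m-1}=4\pi\varepsilon\to 0$ and $\mu(B^M_{\rho_0}(p))=4\pi\varepsilon^2\to 0$, so neither alternative holds and your charging scheme bounds nothing. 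If instead you let the radius shrink until the volume-ratio alternative kicks in (it always does as $r\to 0$, since $\mu(B_r)/r^m\to\omega_m$), then the volume-type balls may carry arbitrarily little volume and you can no longer bound their number by $\mathrm{Vol}(M)/(\overline{V}_0\rho_0^m)$. The paper resolves exactly this with two ingredients you are missing: (i) Topping's maximal function $M(x,R)=\sup_{r\le R} r^{-\frac{1}{m-1}}V(x,r)^{-\frac{m-2}{m-1}}\int_{B(x,r)}|H|$, whose specific normalization is what makes the ODE comparison against $v(r)=\delta r^m$ close (a plain threshold on $\int_B|H|^{m-1}$ produces, via H\"older, an error term $V^{(m-2)/(m-1)}$ that dominates the good term $V^{(m-1)/m}$ for small $V$) and which, again via H\"older, converts the first alternative into the radius bound $r\le\delta^{1-m}\int_{B(x,r)}|H|^{m-1}$ with a purely dimensional constant; and (ii) a post-processing lemma that enlarges each volume-type ball to radius $\tilde r_x\le r_0$ with $V(x,\tilde r_x)\ge V_{0,\delta}=\min(\delta r_0^m,V_0)$, a fixed constant, so that the disjointness count goes through.

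A secondary issue: your ``almost-monotonicity'' input, obtained by ``integrating $\frac{d}{d\rho}(\rho^{-m}\mu(B_\rho))$,'' is an extrinsic-ball statement (it rests on the first variation against the radial vector field of $N$), whereas the covering argument requires \emph{intrinsic} balls so that each ball meets the geodesic in an interval and the Vitali/disjointness argument is valid; an intrinsic ball can have boundary points extrinsically very close to its center, so the extrinsic monotonicity does not transfer. The paper avoids this entirely by deriving the differential inequality $\frac{dV(x,r)}{dr}\ge c(m)^{-1}V(x,r)^{\frac{m-1}{m}}-\int_{B(x,r)}|H|$ from the Hoffman--Spruck Sobolev inequality applied to Lipschitz cutoffs of the intrinsic distance function; the hypotheses of Hoffman--Spruck are also where the bounds $K_N\le k_0$ and $\mathrm{Inj\,Rad}\ge r_0$ actually enter, through the admissible support volume $V_0$. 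Your plan is repairable, but only by importing both the correct maximal-function dichotomy and the post-processing step.
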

%
%(\textcolor{red}{Note that topping actually has a bound for diameter using area and mean curvature (and this was Simon's original inequality), however it is a multiplicative bound, which isn't as strong)} \newline 
%
%(\textcolor{red}{Can $M$ really be non-compact? If it is non-compact then necessarily infinite diameter so kind of stupid anyway, though maybe its interesting to show that both of those integrals blow up? In accordance with Gabriel's horn example}). 
\begin{rem} 
We note the scale invariance of equation \eqref{ourBound} as $g \to \lambda g$ implies that $r_0 \to \lambda r_0$ and $\overline{V}_{0} \mapsto \lambda^m \overline{V}_{0}$ (see the proof of \ref{lem:dichotomy} and \S \ref{thm:main}). We also note that $N$ is allowed to have boundary. 
\end{rem}
\noindent We prove a similar bound for submanifolds, $M$, with $\partial M \neq \emptyset$:
%(\red{Start here, trace through proof and hope to get something similar in terms of the constant}):
%
\begin{thm}
\label{thm:boundary}
Suppose that $M$ is an $m$-dimensional Riemannian manifold with connected boundary, $\partial M \neq \emptyset$, $N$ is an $n$-dimensional Riemannian manifold,
$M \rightarrow N$ is an isometric immersion.  We make the following assumptions on $N$:
\begin{enumerate}
\item The sectional curvature of $N$ is bounded above, that is, $K_N \leq k_0$.
\item The injectivity radius of $N$ is bounded below, that is $\textrm{Inj Rad}(N) \geq r_0 > 0$.
\end{enumerate}
Then the intrinsic diameter of $M$, denoted by $d_{int}(M)$, satisfies
\begin{align} \label{boundaryBound}
 d_{int}(M) & \leq c(m) \Big(\int_M |H|^{m-1} + \frac{r_0}{\overline{V}_{0}}\textrm{Vol}(M) + \textrm{Vol}(M)^{\frac{1}{m}}\Big) + d_{int}(\partial M)\\ \nonumber
 &\leq C(k_0,r_0,m) \Big( \int_M |H|^{m-1} + \max\big(\textrm{Vol}(M), \textrm{Vol}(M)^{\frac{1}{m}} \big) \Big) + d_{int}(\partial M)
\end{align}
where $\overline{V}_{0}$ is the same as in theorem \ref{thm:main}.
\end{thm}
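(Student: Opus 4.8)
The plan is to bootstrap Theorem~\ref{thm:boundary} from Theorem~\ref{thm:main} by a doubling-type argument. Given $M$ with $\partial M \neq \emptyset$, the difficulty is that the closed-case machinery (which ultimately rests on a monotonicity/covering argument for the immersed manifold and the volume constraint $\overline{V}_0$) does not directly apply near the boundary, where small geodesic balls of $M$ need not carry the lower area bound that drives the estimate. So first I would fix two points $p,q \in M$ realizing (up to $\epsilon$) the intrinsic diameter, and a minimizing path $\gamma$ between them. The key dichotomy to set up: either $\gamma$ stays in the ``deep interior'' of $M$ — points at intrinsic distance at least some threshold $\rho$ from $\partial M$ — or it comes within $\rho$ of $\partial M$ at some point.

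In the first case, every point of $\gamma$ has an intrinsic $\rho$-ball in $M$ disjoint from $\partial M$; such a ball behaves like a ball in a boundaryless manifold, so the interior argument of Theorem~\ref{thm:main} (the same covering-by-good-balls and summing-of-areas-and-mean-curvature-integrals scheme, applied along $\gamma$) gives the length of $\gamma$ — hence $d_{int}(M)$ — bounded by $c(m)\big(\int_M |H|^{m-1} + \tfrac{r_0}{\overline{V}_0}\mathrm{Vol}(M)\big)$ plus a term of order $\rho$. Choosing $\rho \asymp \mathrm{Vol}(M)^{1/m}$ (scale-correct, since $\rho$ has dimensions of length and $\mathrm{Vol}(M)$ of length$^m$) absorbs the leftover into the $\mathrm{Vol}(M)^{1/m}$ term in \eqref{boundaryBound}. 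In the second case, $\gamma$ meets the $\rho$-neighborhood of $\partial M$, say at a point $x$ with $d_M(x,\partial M) < \rho$; then $d_M(p,x) \le d_M(p,\partial M) + \rho$ and similarly for $q$, so $d_{int}(M) = d_M(p,q) \le d_M(p,\partial M) + d_M(q,\partial M) + 2\rho$. Now bound $d_M(p,\partial M)$: let $p' \in \partial M$ be nearest, and repeat — run the interior covering argument along a minimizing segment from $p$ to $p'$ as long as it is deep inside $M$, which it is except possibly near its endpoint $p'$; the portion within $\rho$ of $\partial M$ has length $\le \rho$ by definition of $p'$ being the nearest boundary point (the segment is distance-to-boundary monotone near $p'$). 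This again yields $d_M(p,\partial M) \le c(m)\big(\int_M|H|^{m-1} + \tfrac{r_0}{\overline{V}_0}\mathrm{Vol}(M)\big) + O(\rho)$, and together with the analogous bound for $q$ and the $d_{int}(\partial M)$ that appears when we must route between $p'$ and $q'$ along the boundary, we get \eqref{boundaryBound}.

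More carefully on the last point: what actually shows up is that, having reached $p' \in \partial M$ from $p$ and $q' \in \partial M$ from $q$, the triangle inequality gives $d_M(p,q) \le d_M(p,p') + d_M(p',q') + d_M(q',q) \le d_M(p,p') + d_{int}(\partial M) + d_M(q',q)$, since $d_M(p',q') \le d_{\partial M}(p',q') \le d_{int}(\partial M)$. So the $d_{int}(\partial M)$ summand in the statement comes for free from this routing, and the two interior-distance terms are each controlled as above. Summing and using $\max(\mathrm{Vol}(M),\mathrm{Vol}(M)^{1/m})$ to dominate both the $\mathrm{Vol}(M)$ term (from $\tfrac{r_0}{\overline{V}_0}\mathrm{Vol}(M)$, absorbing the geometry constants) and the $\mathrm{Vol}(M)^{1/m}$ term (from the choice $\rho \asymp \mathrm{Vol}(M)^{1/m}$) yields the second, cleaner inequality.

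The main obstacle I anticipate is making the ``interior covering argument'' genuinely work along a path that is only required to stay distance $\ge \rho$ from $\partial M$: one needs that the good-ball estimate of Theorem~\ref{thm:main} is local enough that balls of a fixed small radius comparable to $\min(\rho, r_0, \text{curvature scale})$ still satisfy the area lower bound and the mean-curvature accounting, and that the constants do not blow up as the path approaches the $\rho$-threshold. This is really a matter of extracting from the proof of Theorem~\ref{thm:main} a local statement — ``a unit-speed segment of length $L$ whose $r$-tube is immersed with the standard monotonicity costs at least $(L/r)\cdot(\text{area of a good ball})$ in area-plus-mean-curvature'' — rather than its global consequence, and then applying it on the deep-interior portion. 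If the proof of Theorem~\ref{thm:main} is already organized around such a local lemma (which the reference to \texttt{lem:dichotomy} in the remark suggests), this step is routine; otherwise it requires re-deriving that local version, which is where the real work lies.
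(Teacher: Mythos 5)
Your overall architecture is the paper's: route $p$ to $q$ through nearest boundary points $p', q'$, pay $d_{int}(\partial M)$ for the boundary leg, and control the two interior legs by the covering argument of Theorem~\ref{thm:main}. (Your Case 1 is superfluous once you have the routing in your ``more carefully'' paragraph, and the intermediate inequality $d_M(p,x) \le d_M(p,\partial M) + \rho$ in Case 2 is backwards --- the triangle inequality gives $d_M(p,\partial M) \le d_M(p,x) + \rho$, not the reverse; a long thin rectangle shows the claimed inequality fails. But this is superseded by the correct routing, so it is harmless.)

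The genuine gap is exactly the step you flag at the end as ``where the real work lies,'' and it is the heart of the theorem: why does the covering argument along a segment that approaches $\partial M$ cost only an extra $\mathrm{Vol}(M)^{1/m}$? The obstruction is that for a center $x$ with $\mathrm{dist}(x,\partial M) < r_0$, the post-processing step (Lemma~\ref{postProcess}) cannot grow the ball to radius $r_0$, so a ``volume-type'' ball near the boundary is only guaranteed volume $\ge \delta r_x^m$ with $r_x$ possibly tiny, and a priori there could be many disjoint such balls whose radii do not sum to anything controlled. Your fix --- cap radii at $\rho \asymp \mathrm{Vol}(M)^{1/m}$ and claim the leftover is ``of order $\rho$'' --- is not justified as stated: the leftover from these small-volume balls is a ball-counting quantity of order $\mathrm{Vol}(M)/(\delta\rho^{m-1})$ (at most $\mathrm{Vol}(M)/(\delta\rho^m)$ disjoint balls of volume $\ge \delta\rho^m$, each of radius $\le \rho$), which happens to equal $\rho$ only after substituting your choice of $\rho$; this computation is the missing content, not a remark. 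The paper avoids the counting entirely by exploiting that $\alpha$ is a \emph{minimizing} segment to $\partial M$, so distance to the boundary decreases linearly along it: among the exceptional disjoint balls (within $r_0$ of $\partial M$, small $M(x_i,r_i)$, volume below $V_{0,\delta}$), the one whose center is farthest from $\partial M$ must, by maximality of its radius and the ODE argument, have radius equal to its distance to the boundary, hence its ball contains the entire terminal portion of $\alpha$ and in particular every other exceptional center --- contradicting disjointness. So there is \emph{at most one} exceptional ball, and its radius is $\le (\mathrm{Vol}(M)/\delta)^{1/m}$ since $\kappa > \delta$ on it. That single-ball argument (Lemma~\ref{distToBoundary} and Figure~\ref{fig:oneball}) is what produces the $\mathrm{Vol}(M)^{1/m}$ term; your proposal identifies the right place for it but does not supply it.
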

%
%\textcolor{red}{Woah, where did the $Vol(M)^{1/m}$ come in?}
%
\begin{rem}
We remark that for Theorems~\ref*{thm:main} and~\ref*{thm:boundary}, we can replace $r_0$
with $\overline{R}(M) = \inf_{x \in M} \textrm{Inj Rad}(N)(x)$ to achieve a potentially stronger bound. In other words, we only need the injectivity radius of $N$ at points in $M$ to be bounded. 
\end{rem}
We refer the reader to work of Wu \cite{wu2023diameter} and Paeng \cite{paeng2014diameter} for related diameter bounds. We emphasize that the diameter bounds from previous literature all forego an area term at the cost of imposing constraints on $\textrm{Vol}(M)$ in terms of the sectional curvature restrictions on the ambient manifold, which are non-trivial unless $K_N$ is non-positive. In some sense, equations \ref{ourBound} and \ref{boundaryBound} provide interpolations of Topping's bound \eqref{toppingBound}, as well as the diameter bound for minimal surfaces coming from the monotonicity formula. Note that
\begin{itemize}
\item For minimal surfaces, equation \eqref{ourBound} gives a stronger bound than the monotonicity formula, which only bounds the \textit{extrinsic} diameter, as opposed to intrinsic.
\item We emphasize that in the statement of Theorem~\ref{thm:main}, there is no restriction on the compactness/non-compactness of $M$ or $N$, and in particular, \textbf{when $M$ is non-compact, Theorem ~\ref{thm:main} shows that either $\int_{M} |H|^{m-1}$ or $\text{Vol}(M)$ is infinite}. This sharpness is exemplified by the plane and Gabriel's horn in $\R^3$, for which bounds of the form $d \leq C \cdot \int_M |H|^{m-1}$ and $d \leq C \cdot A(M)$ fail individually.
\item We pose the following question: are there better bounds in equation \eqref{boundaryBound} using only one of $\textrm{Vol}(M)$ and $\textrm{Vol}(M)^{1/m}$? For example, if $M$ is a minimal surface ($H = 0$) is contained in a small ball in $N$, the monotonicity formula holds for all of $M$, giving a bound in terms of 
$\textrm{Vol}(M)^{1/m}$.
\end{itemize}
We remark that there are modified forms of the monotonicity formula for varifolds with $L^{\infty}$ or $L^p$-bounded mean curvature as in \cite[Thm 17.6, 17.7]{simon1984lectures}. However, these formula do not produce diameter bounds directly. \newline 
\indent We present a few applications of Theorems \ref{thm:main} and \ref{thm:boundary}. Our first application is to submanifolds with bounded mean curvature and follows directly from \ref{thm:main}:
%
%\blue{Jared I'm going to let you play with the Corollaries}
%Okay so this really does follow from Theorem $1$! Because we assume bounded sectional
\begin{cor} \label{finiteDiamCor} Suppose that $N^{n+1}$ is a manifold with bounded sectional curvature, $M^m \subseteq N^{n+1}$ is a submanifold with prescribed mean curvature (PMC) $H\Big|_M = h$, $H \in L^{m-1}(M)$, $\textrm{Vol}(M) < \infty$, and $\partial M$ is connected with finite diameter (or is empty). Then $M^n$ has finite diameter.
\end{cor}
%
% \red{Don't need sectional curvature bounded - just need ambient metric $C^1$, because everything else can be defined. Really want to emphasize that don't need to have well defined sectional curvature} 
%\red{Explain a bit more about how this corollary follows from the theorem, e.g. smooth approximation}
Such submanifolds show up in min-max construction of constant mean curvature (CMC) and prescribed mean curvature (PMC) surfaces (e.g. when the prescribing function is bounded), such as in \cite{chambers2020existence, sun2020multiplicity, stryker2024min, stryker2023localization, mazurowski2022prescribed}. A priori, these surfaces may have infinite diameter, but Corollary~\ref{finiteDiamCor} resolves this. We can also get diameter bounds for non-smooth metrics:
%
%(\textcolor{red}{Unsure if the below is really interesting enough to include? I think we would have to assume that either $M$ is compact, in which case we can do a smooth approximation in a neighborhood of $M$, but also finite diameter is kind of trivial via nash embedding of a neighborhood of $M$? which would be precompact. I guess the other reasonable assumption is that $N$ is actually $C^2$ outside some compact set, $K$. Then we can just approximate $g$ by a smooth metric on $K$ and do the computation. Otherwise, I have no idea how to do a good approximation of a $C^1$ metric on $N$}) (\blue{I think its enough to include lol})
%
\begin{cor} \label{finiteDiamCOne} Suppose $M^m$ is a closed submanifold of  $(N^{n+1}, g)$. Further suppose that $g$ is a $C^1$ metric such that $g \Big|_{M \backslash K}$ is $C^2$ for some compact subset $K \subseteq M$. Suppose $M^m \subseteq N^{n+1}$ has bounded mean curvature and finite area, then it has finite diameter. 
\end{cor}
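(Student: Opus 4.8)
The plan is to reduce to the already‑proved smooth case, Corollary~\ref{finiteDiamCor} (equivalently Theorem~\ref{thm:main}), by smoothing $g$ near the compact ``bad set'' $K$ and then comparing the two resulting distance functions. Concretely, I would fix a precompact open set $W \subseteq N$ containing $K$ (more precisely, containing the locus where $g$ fails to be $C^2$, which the hypothesis places in a compact region meeting $M$ in $K$). Since $g$ is $C^1$, componentwise mollification in local coordinates converges to $g$ in $C^1_{\mathrm{loc}}$, so a standard cutoff‑and‑mollify construction produces a smooth Riemannian metric $\tilde g$ on $N$ with $\tilde g = g$ on $N \setminus W$ and with $\tilde g$ as $C^1$‑close to $g$ on $\overline W$ as desired; in particular $\tfrac12 g \le \tilde g \le 2g$ as quadratic forms on $TN$, since the two metrics agree off the compact set $\overline W$ and are both continuous and positive definite on it.

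Next I would check that the isometric immersion $M \hookrightarrow (N,\tilde g)$ satisfies the hypotheses of Corollary~\ref{finiteDiamCor} (and Theorem~\ref{thm:main}). For the ambient bounds: on $N \setminus W$ we have $\tilde g = g$, which is $C^2$ with $K_N \le k_0$ and $\mathrm{Inj\,Rad} \ge r_0$, while on the compact set $\overline W$ the metric $\tilde g$ is smooth, hence its sectional curvature is bounded above and its injectivity radius bounded below there; thus $(N,\tilde g)$ has sectional curvature bounded above and injectivity radius bounded below. For finite area: $M$ is closed in $N$, so $M \cap \overline W$ is compact with finite $\tilde g$‑area, whereas $\mathrm{Vol}_{\tilde g}(M \setminus W) = \mathrm{Vol}_g(M \setminus W) \le \mathrm{Vol}_g(M) < \infty$. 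For bounded mean curvature: off $W$ the $\tilde g$‑mean curvature of $M$ coincides with its $g$‑mean curvature, which is bounded by hypothesis, and on the compact set $M \cap \overline W$ it is bounded because $\tilde g$ is smooth; hence $H_{\tilde g} \in L^\infty(M) \subseteq L^{m-1}(M)$. Corollary~\ref{finiteDiamCor} then gives $d_{int}(M,\tilde g) < \infty$, and finally $\tilde g \ge \tfrac12 g$ on $TM$ forces every path in $M$ to have $\tilde g$‑length at least $\tfrac{1}{\sqrt2}$ its $g$‑length, so taking infima over paths, $d_{int}(M,g) \le \sqrt2\, d_{int}(M,\tilde g) < \infty$. (If $M$ is disconnected, one runs this on each component; there are only finitely many of positive area by the monotonicity‑type lower area bound underlying the main theorem.)

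I do not expect a deep obstacle; the care is concentrated in the gluing step — producing a genuinely smooth $\tilde g$ that equals $g$ outside a compact set while perturbing $g$ only in $C^1$, and in reading the hypothesis on where $g$ is not $C^2$ so that the mean curvature of $M$ in $N$ is actually defined throughout $N \setminus W$ (one wants $g$ to be $C^2$ on a neighborhood of $M \setminus K$ in $N$, not merely its restriction to $M$). A clean variant that avoids smoothing entirely is to excise from $M$ a smoothly bounded precompact neighborhood $V \supseteq K$, apply Theorem~\ref{thm:boundary} to $M \setminus V$ — whose boundary $\partial V$ is compact and hence of finite intrinsic diameter, and near which $g$ is $C^2$ — and then reattach the compact piece using $d_{int}(M,g) \le d_{int}(M \setminus V, g) + 2\,\mathrm{diam}(\overline V, g) < \infty$, the last term being finite since $\overline V$ is a compact space with a continuous metric.
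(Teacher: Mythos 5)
Your main argument is essentially the paper's own proof: mollify $g$ on a precompact neighborhood of $K$ to obtain a $C^2$ metric agreeing with $g$ outside a compact set, compare distances, volumes, and mean curvatures under the $C^1$-small perturbation, and then apply Theorem~\ref{thm:main} (via Corollary~\ref{finiteDiamCor}) to the smoothed metric. Your concluding excision variant using Theorem~\ref{thm:boundary} is a sensible alternative, and your caveat about needing $g$ to be $C^2$ on an ambient neighborhood of $M\setminus K$ is a fair point that the paper also glosses over, but the core route is the same.
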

Corollary \ref{finiteDiamCOne} is proved in \S \ref{Applications} and essentially boils down to approximating $g$ with a $C^2$ metric, $g^*$, on an open neighborhood of $K$. Note that this bound is not quantitative (since the choice of smoothing is not unique), but it allows for a diameter bound in terms of the $g$ metric, despite $g$ not having a well defined/bounded sectional curvature tensor everywhere, which is essential in the proof of Theorems \ref{thm:main} and \ref{thm:boundary}. In particular, the proof of Corollary \ref{finiteDiamCOne} indicates that there may be another proof of bounding diameter by mean curvature and volume, using lower regularity techniques. 
%(\red{Maybe some connection to spaces with weak regularity? Something about RCD spaces} \blue{Not sure about this, I assume this is by approximation?  Seems like we are saying we can do something with theorems that can't be applied...})
%

%
\section{Setup and Lemmas}
For the remainder of the article, we will use the following notation.
$B(x,r)$ denotes the intrinsic open ball in $M$ centered at $x$ of intrinsic radius $r$, and $V(x,r)$ is its volume. 

The proof follows that of Topping.  There are several ingredients, starting with a generalization of the Michael-Simon Sobolev inequality due to Hoffman and Spruck:
\begin{thm}[Thm 2.1 \cite{HS}]
\label{thm:HS}
Let $M \rightarrow N$ be an isometric immersion of Riemannian manifolds of dimension $m$ and $n$, respectively. Assume that $K_N \leq b^2$ for $b \in \mathbb{C}$, and let $h$ be a non-negative $C^1(M)$ (or $W^{1,1}(M)$) function on $M$ vanishing on $\partial M$. Then
$$ \big( \int_M h^{m/(m-1)} d\mu \big)^{(m-1)/m)} \leq c(m) \int_M ( |\nabla h| + h |H| ) d \mu,$$
provided
$$ b^2 (1 - \beta)^{-2/m}(\omega_m^{-1} Vol(\textrm{supp} \; h))^{2/m} \leq 1$$
and
$$ 2 \rho_0 \leq \overline{R}(M),$$
where
$$ \rho_0 = b^{-1} \sin^{-1} \big( b(1-\beta)^{-1/m} (\omega_m^{-1} Vol(\textrm{supp} \; h))^{1/m} \big) $$
if $b$ is real, and if $b$ is imaginary, then
$$ \rho_0 = (1-\beta)^{-1/m} (\omega_m^{-1} Vol(\textrm{supp} \; 
h)^{1/m}). $$
Here, $\overline{R}(M)$ is the injectivity radius of $N$ restricted to $M$.  In addition, $\beta$ is a free parameter,
$0 < \beta < 1$, and
$$ c(m) = c(m,\beta) = \pi \cdot 2^{m-1} \beta^{-1} (1 - \beta)^{-1/m} \frac{m}{m-1} \omega_m^{-1/m}.$$
\end{thm}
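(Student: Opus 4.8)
The plan is to follow the Michael--Simon argument, replacing the Euclidean position field by a radial field built from the ambient distance function and controlling the extra terms through comparison geometry. Fix a point $\xi \in M$ (by an approximation we may assume it is a Lebesgue point of $h$ and of $|\nabla h| + h|H|$), let $\rho = d_N(\xi,\cdot)$ be the ambient Riemannian distance, and for $0 < \sigma < \overline R(M)$ consider on $M$ the vector field $X = h(x)\,\varphi(\rho)\,\nabla^N\rho$ for a suitable radial weight $\varphi$. Since $h$ vanishes on $\partial M$ and has compact support, the first variation (tangential divergence) identity gives $\int_M \mathrm{div}_M X\, d\mu = -\int_M h\,\varphi(\rho)\,\langle \nabla^N\rho, H\rangle\, d\mu$, with no boundary contribution. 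Expanding $\mathrm{div}_M X = \langle \nabla^M h,\, \varphi(\rho)\nabla^N\rho\rangle + h\,\mathrm{div}_M(\varphi(\rho)\nabla^N\rho)$ and using $|\nabla^N\rho| = 1$, the whole inequality is reduced to a lower bound on $\mathrm{div}_M(\varphi(\rho)\nabla^N\rho)$, i.e.\ on the trace over $T_xM$ of $\mathrm{Hess}^N\rho$ together with the radial derivative of $\varphi$.

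The crucial geometric input is the Hessian comparison theorem: the hypothesis $K_N \le b^2$, together with $2\rho_0 \le \overline R(M)$ so that $\rho$ is smooth and $\exp_\xi$ is a diffeomorphism on the ball of radius $2\rho_0$, yields $\mathrm{Hess}^N\rho \ge \mathrm{ct}_b(\rho)\,(g_N - d\rho\otimes d\rho)$, where $\mathrm{ct}_b(\rho) = b\cot(b\rho)$ when $b$ is real, $|b|\coth(|b|\rho)$ when $b$ is imaginary, and $1/\rho$ in the limiting flat case. Tracing against the $m$-plane $T_xM$, adding the mean curvature term, and choosing $\varphi$ to absorb the $\mathrm{ct}_b$ factor produces a differential inequality of monotonicity type for a weighted mass ratio
\begin{equation*}
  I(\sigma) \;=\; \sigma^{-m}\!\!\int_{M \cap B^N_\sigma(\xi)} \! h\, d\mu \;+\; (\text{lower-order terms involving } |\nabla h| \text{ and } h|H|).
\end{equation*}
The smallness condition $b^2(1-\beta)^{-2/m}(\omega_m^{-1}\mathrm{Vol}(\mathrm{supp}\,h))^{2/m} \le 1$ and the definition of $\rho_0$ are exactly what guarantee that $\mathrm{ct}_b(\rho)$ stays positive and uniformly controlled on the range $[0,\rho_0]$, while the free parameter $\beta$ records how much of the positive $\mathrm{ct}_b$ term is spent dominating the geometry and how much is retained for the final estimate.

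Integrating this differential inequality from a small radius up to $\rho_0$ and letting the inner radius shrink to $0$ bounds $h(\xi)$ (at a Lebesgue point) by $c(m,\beta)\,\rho_0^{-m}$ times a local integral of $|\nabla h| + h|H|$ plus a term of the form $\rho_0^{-m}\int_M h\, d\mu$; inserting the formula for $\rho_0$ in terms of $\mathrm{Vol}(\mathrm{supp}\,h)$ turns this into a genuine pointwise bound on $h$, and a Vitali-type covering argument together with the coarea formula sums these local estimates into the claimed global $L^{m/(m-1)}$ inequality, with the constant collapsing to $c(m,\beta) = \pi \cdot 2^{m-1}\beta^{-1}(1-\beta)^{-1/m}\frac{m}{m-1}\omega_m^{-1/m}$. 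A standard mollification then reduces the $W^{1,1}$ case to the $C^1$ case. I expect the main obstacle to be the comparison-geometry bookkeeping: arranging that the Hessian-comparison term and the mean-curvature term combine so the monotone quantity truly has a sign all the way up to radius $\rho_0$, and tracking the dependence of every constant on $\beta$ and $m$ — this uniformity is precisely where the injectivity-radius bound and the volume constraint on $\mathrm{supp}\,h$ are consumed.
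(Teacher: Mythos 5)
You should first note that the paper does not prove this statement at all: it is imported verbatim as Theorem~2.1 of Hoffman--Spruck \cite{HS}, so the only benchmark is their original argument. Your skeleton matches it: the Michael--Simon first-variation identity applied to the field $h\,\varphi(\rho)\nabla^N\rho$ built from the ambient distance, the Hessian comparison $\mathrm{Hess}^N\rho\ \geq\ b\cot(b\rho)\,(g_N-d\rho\otimes d\rho)$ valid under $K_N\leq b^2$ inside the injectivity radius (direction correct), the resulting monotonicity-type differential inequality for $\sigma^{-m}\int_{B^N_\sigma(\xi)\cap M}h$, and the reading of the two hypotheses as ensuring the comparison survives up to the radius $\rho_0$ dictated by $\mathrm{Vol}(\mathrm{supp}\,h)$. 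That part is accurate.

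The gap is in the assembly. Integrating the monotonicity inequality from an inner radius $\sigma$ up to $\rho_0$ produces an error term of the form $\int_\sigma^{\rho_0}\tau^{-m}\overline{\psi}(\tau)\,d\tau$ with $\overline{\psi}(\tau)=\int_{B_\tau}(|\nabla h|+h|H|)\,d\mu$, which is of size $\sigma^{-(m-1)}\overline{\psi}$ and blows up as $\sigma\to 0$; so the pointwise bound on $h(\xi)$ you claim at Lebesgue points does not follow, and Hoffman--Spruck never derive one. Moreover, even granting such a bound, it controls $h$ by an average of $h$ itself plus errors, so ``summing over a covering'' cannot produce an inequality whose right-hand side contains $h$ only through $|\nabla h|$ and $h|H|$ --- you would need a Faber--Krahn/Poincar\'e-type inequality that is essentially equivalent to the statement being proved. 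The actual proof works on the level sets $A_t=\{h>t\}$: at each point one runs a stopping-time dichotomy producing a radius $\rho\leq\rho_0$ at which either $\overline{\psi}(\rho)\gtrsim t\rho^{m-1}$ or the local mass of $h$ is $\gtrsim t\rho^{m}$ (the volume condition and the $\arcsin$ formula for $\rho_0$ guarantee the selection terminates), then applies a $5r$-covering lemma to disjointify, and finally integrates in $t$ via the coarea formula and the Minkowski/layer-cake bound $\big(\int h^{m/(m-1)}\big)^{(m-1)/m}\leq\int_0^\infty\mu(A_t)^{(m-1)/m}dt$; this is exactly the dichotomy structure the present paper reuses in Lemma~\ref{lem:dichotomy}. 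The explicit constant $\pi\,2^{m-1}\beta^{-1}(1-\beta)^{-1/m}\frac{m}{m-1}\omega_m^{-1/m}$ is the bookkeeping output of that selection/covering step and cannot simply ``collapse'' out of the mean-value route you describe.
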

\noindent We note that for a fixed value of $\beta$ and function $h$, the conditions of theorem \ref{thm:HS} are scale invariant as $g \mapsto \lambda g$ gives $K \mapsto \lambda^{-1} K$, $\text{Vol}(\textrm{supp} \; h) \to \lambda^m \text{Vol}(\textrm{supp} \; h)$, and $\overline{R}(M) \to \lambda \overline{R}(M)$. When $b \in \mathbb{R}$, one can verify that each condition scales appropriately, and when $b \in i\mathbb{R}$, $\overline{R}(M) = \infty$ by Cartan-Hadamard, and both conditions automatically hold. \newline 
%
%\noindent {\bf Remark: } We can use the lower bound on the sectional curvature to get a lower bound on the Ricci curvature, which combined with the Bishop-Gromov inequality gives us upper bounds on the volume of balls. \blue{I don't think we actually do this.} \red{J: will come back and verify this. THen remove it}
%
\indent For $b \in \R$, we define $V_0 = V_0(b, r_0, \beta) > 0$ to be the smallest of the two positive solutions to 
\begin{align} \label{VNoughtDef}
b^2 (1 - \beta)^{-2/m}(\omega_m^{-1} V)^{2/m} &= 1 \\ \nonumber
2 \frac{1}{b} \sin^{-1}(b (1 - \beta)^{-1/m}(\omega_m^{-1} V)^{-1/m}) &= r_0
\end{align}
when $\frac{|b|}{2} r_0 \in \left(0, \frac{\pi}{2} \right]$. If $\frac{|b|}{2} r_0 > \frac{\pi}{2}$, we ignore the second equation and define $V_0$ according to the first. Since $\overline{R}(M) \geq r_0$, the defined value of $V_0$ will satisfy both constraints of theorem \ref{thm:HS}. \newline 
\indent We define two quantities, in the same vein as Topping.
\begin{defn}
\label{defn:maximal}
$$ M(x,R) = \sup_{r \in (0,R]} r^{-\frac{1}{m-1}} \big( V(x,r) \big)^{-\frac{m-2}{m-1}} \int_{B(x,r)} |H| d \mu$$
\end{defn}

\begin{defn}
\label{defn:minimal}
$$ \kappa(x,R) = \inf_{r \in (0,R]} \frac{V(x,r)}{r^m}$$
\end{defn}

Using the Hoffman-Spruck version of the Michael-Simon Sobolev inequality (Theorem~\ref*{thm:HS}), we can prove the following
lemma, analogous to \cite[Lemma 1.2]{Top}:

\begin{lem}
\label{lem:dichotomy}
As before, let $M$ and $N$ be $m$- and $n$-dimensional Riemannian manifolds, respectively.  If $M$ is closed and isometrically
immersed in $N$, and if $N$ is complete with uniformly bounded sectional curvature $\sec \leq b^2$, then there exists a constant $\delta = \delta(m) > 0$ and for all $x \in M$, there exists $R_x \in (0, r_0]$ so that at least 
one of the following is true:
\begin{enumerate}
\item $M(x, R_x) > \delta$
%\item There is a radius $0 < r_X \leq R_0$ so that the ball of radius $r_x$ in $M$ is at least $\delta R_0^m$.
\item $\kappa(x, R_x) > \delta$
%\item $\kappa(x,R) > \delta$ (\red{J: should this be $M(x,R)$?}) for all $R \in (0,R_0]$.
\end{enumerate}
%Here, $R_0 = R_0(m, b)$ is a positive constant dependent on the geometry of $N$ only. 
\end{lem}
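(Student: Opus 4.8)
The plan is to adapt Topping's argument: convert the Hoffman--Spruck inequality (Theorem~\ref{thm:HS}) into a first-order differential inequality for the volume function $V(r):=V(x,r)$ of intrinsic balls and integrate it from the scale $r\to 0^+$ — where $V(r)/r^m\to\omega_m$ by the local Euclidean structure of the smooth Riemannian manifold $M$ — to obtain a uniform lower volume-density bound, unless the mean curvature concentrates at some scale $\le r_0$.

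First I would fix $x\in M$ and, using that $V(\cdot):=V(x,\cdot)$ is continuous, nondecreasing, and vanishes at $0$, let $R_x\in(0,r_0]$ be the largest radius with $V(x,R_x)\le V_0$, where $V_0$ is the threshold from \eqref{VNoughtDef} attached to a fixed value of $\beta$ (say $\beta=\tfrac12$, which fixes $c(m)=c(m,\tfrac12)$); thus $R_x=r_0$ if $V(x,r_0)\le V_0$, and otherwise $V(x,R_x)=V_0$. The point of this choice is that, since $\rho_0$ is increasing in $\textrm{Vol}(\textrm{supp}\,h)$ and $\overline R(M)\ge r_0$, the hypotheses of Theorem~\ref{thm:HS} hold for \emph{every} nonnegative $W^{1,1}$ function $h$ supported in $B(x,r)$ with $r\le R_x$; this is exactly where the curvature bound $K_N\le k_0$, the injectivity radius bound, and the threshold $V_0$ enter. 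If $M(x,R_x)>\delta$ we land in alternative (1), so we may assume $M(x,R_x)\le\delta$ and must produce $\kappa(x,R_x)>\delta$, where $\delta=\delta(m)>0$ is a dimensional constant fixed at the end.

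Next I would apply Theorem~\ref{thm:HS} to $h=\chi_{B(x,r)}$ — justified by approximating with the Lipschitz cutoffs $\min\!\big(1,\varepsilon^{-1}(r-d(x,\cdot))_+\big)$ and letting $\varepsilon\to 0$ — to get, for a.e.\ $r\in(0,R_x)$,
\[ V(r)^{\frac{m-1}{m}}\ \le\ c(m)\Big(\textrm{Per}\big(B(x,r)\big)+\int_{B(x,r)}|H|\,d\mu\Big)\ \le\ c(m)\Big(V'(r)+\int_{B(x,r)}|H|\,d\mu\Big), \]
where $\textrm{Per}(B(x,r))\le\mathcal H^{m-1}(\{d(x,\cdot)=r\})\le V'(r)$ by the coarea formula applied to the $1$-Lipschitz function $d(x,\cdot)$. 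Inserting $\int_{B(x,r)}|H|\le M(x,R_x)\,r^{\frac{1}{m-1}}V(r)^{\frac{m-2}{m-1}}\le\delta\,r^{\frac{1}{m-1}}V(r)^{\frac{m-2}{m-1}}$ (Definition~\ref{defn:maximal}) and substituting $u(r):=V(r)^{1/m}$, the exponents collapse (one checks $\tfrac{m(m-2)}{m-1}-(m-1)=-\tfrac1{m-1}$) and I expect to arrive at the clean, scale-invariant inequality
\[ c(m)\,m\,u'(r)\ \ge\ 1-c(m)\,\delta\,\big(r/u(r)\big)^{\frac{1}{m-1}}\qquad\text{for a.e. }r\in(0,R_x), \]
together with $u(0^+)=0$ and $u(r)/r\to\omega_m^{1/m}$. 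Finally I would set $\eta:=(4c(m)m)^{-1}$ and take $\delta$ a sufficiently small dimensional multiple of $\min\!\big(\eta^m,\ \eta^{1/(m-1)}/c(m)\big)$, and argue by contradiction: if $\kappa(x,R_x)\le\delta<\eta^m$ there is $r^*\le R_x$ with $u(r^*)<\eta r^*$; letting $r_1\in(0,r^*)$ be the first zero of $u(r)-\eta r$, on $(0,r_1)$ we have $(r/u(r))^{1/(m-1)}\le\eta^{-1/(m-1)}$, hence $u'(r)\ge\tfrac1{2c(m)m}$; integrating from $0$ to $r_1$ forces $\eta r_1=u(r_1)\ge\tfrac{r_1}{2c(m)m}=2\eta r_1$, a contradiction. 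Hence $\kappa(x,R_x)\ge\eta^m>\delta$, which is alternative (2). (The scale invariance of \eqref{VNoughtDef} and of the displayed inequality is the consistency check matching the Remark after Theorem~\ref{thm:main}; since $g\mapsto\lambda g$ sends $r_0\mapsto\lambda r_0$ and $V_0\mapsto\lambda^m V_0$, the construction of $R_x$ is scale covariant and $\delta(m)$ is genuinely dimensionless.)

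I expect the main obstacle to be Step 3 — not the integration itself, which is the short continuity argument above, but arranging matters so that \emph{all} constants are dimensional and the differential inequality is truly scale invariant. Concretely: (i) choosing $R_x$ via the volume threshold $V_0$ so that Hoffman--Spruck is legitimately applicable on every subball $B(x,r)$, $r\le R_x$, and then (ii) handling the BV/coarea passage $\textrm{Per}(B(x,r))\le V'(r)$ and the limit $h\to\chi_{B(x,r)}$ so that no constant depending on the geometry of $M$ or $N$ sneaks in. A further point to check is the borderline case $\tfrac{|b|}{2}r_0>\tfrac{\pi}{2}$ of \eqref{VNoughtDef}, where $V_0$ is defined by the first equation alone and one must verify separately (as asserted in the setup, via $\overline R(M)\ge r_0$ and the Cartan--Hadamard case when $b\in i\mathbb R$) that the radius condition $2\rho_0\le\overline R(M)$ of Theorem~\ref{thm:HS} still holds.
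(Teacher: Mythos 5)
Your proposal is correct and follows essentially the same route as the paper: both adapt Topping's Lemma 1.2 by feeding the Hoffman--Spruck inequality on small intrinsic balls (with the volume threshold $V_0$ guaranteeing its hypotheses) into a differential inequality for $V(x,r)$ and choosing $\delta(m)$ small enough that the Euclidean-density initial condition $V(x,r)/r^m\to\omega_m$ propagates to $\kappa(x,R_x)>\delta$. Your substitution $u=V^{1/m}$ with a first-crossing argument is just a repackaging of the paper's comparison with $v(r)=\delta r^m$, and your explicit definition of $R_x$ via the threshold $V_0$ is a slightly cleaner version of the paper's choice of $R_x$.
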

\begin{proof}
%To determine $R_0$, we go back to the statement of Theorem~\ref*{thm:HS}.  There is some volume $v > 0$ so that, if the support of $h$ has volume less than $V$, then the theorem holds.  This value $V$ depends on the injectivity radius of $N$, the upper bound on the sectional curvature of $N$, and the dimension of $N$. Thus, it only depends on the geometry of $N$; we take $R_0$ so that $R_0^m = V$.  \newline
%
%\purple{Alternate to above: We define $R_0 = \min(\overline{R}(M), V_0(\beta = 1/2)^{1/m})$, and proceed as follows:}
%
As in Topping \cite[Lemma 1.2]{Top}, suppose $\delta > 0$ to be chosen later, and suppose $M(x,r) \leq \delta$ for some $r \in [0,r_0]$. Let $R_x$ be the largest of the values in $[0, r_0]$ for which $M(x, R_x) \leq \delta$ and the hypothesis of theorem \ref{thm:HS} are satisfied when $\textrm{supp} \; h = B(x, R_x)$ (say for $\beta = 1/2$). By definition
\[
\int_{B(x,r)} |H| d \mu \leq \delta r^{1/(m-1)} V(x,r)^{(m-2)/(m-1)}
\]
for all $r \in [0,R_x]$. 
%Define $\tilde{R}_x$ so that $V(x,\tilde{R}_x) = \delta V_0(\beta = 1/2)$.
%If $R_x > \tilde{R}_x$, we have the first conclusion. 
%
%\purple{Alternate to the below: let $r_x$ such that $V(x,r_x) = \delta V_0(\beta = 1/2)$. Note that if $r_x < R_0$, then the first condition is true. Thus, we can assume WLOG that $r_x \geq R_0$}
%
We consider the following Lipschitz function on $M$
\[
f(y) = \begin{cases}
1 & y \in B(x,r) \\
1 - \frac{1}{\mu}(\text{dist}_M(x,y) - r) & y \in B(x,r+\mu) \backslash B(x,r) \\
0 & y \not \in B(x,r+\mu)
\end{cases}
\]
\noindent We choose $r < R_x$ and consider all $\mu$ sufficiently small so that the hypothesis of Theorem \ref{thm:HS} are satisfied for $\text{supp}(f) = B(x,r+\mu)$. Applying \ref{thm:HS} (for again, $\beta = 1/2$), we compute
%with $r, \mu$ chosen sufficiently small, we get %(\red{Actually in Hoffman-Spruck, $c(m) = c(m, \beta)$ and $\beta$ connects to $b$ - the ambient curvature bound. Since $\delta$ depends on $c(m) = c(m, \beta)$ - maybe there is $b$ dependence???}) (\purple{Okay I think this is resolvable, and we should just choose $\beta = 1/2$ for clarity and take $r$ potentially even smaller to satisfy the conditions of Hoffman spruck, but again in a way that depends on ambient curvature bounds. With this choice of $\beta$, $c(m)$ is clearly independent of sectional curvature bounds (even though the ball size is dependent on sectional curvature bounds). I'll make this clear})
\[
Vol(x,r)^{(m-1)/m} \leq c(m) \left[ \frac{1}{\mu}\left(V(x,r+\mu) - V(x,r) \right) + ||H||_{L^1(B(x,r))} \right]
\]
sending $\mu \to 0$, we conclude 
\[
V(x,r)^{(m-1)/m} \leq c(m) \left[ \frac{dV(x,r)}{dr} + ||H||_{L^1(B(x,r))} \right] 
\]
using our assumed bound on the mean curvature, we get 
\begin{equation} \label{ODEInequality}
\frac{dV(x,r)}{dr} + \delta r^{1/(m-1)} V(x,r)^{(m-2)/(m-1)} - c(m)^{-1} V(x,r)^{(m-1)/m} \geq 0
\end{equation}
we now proceed exactly as in Topping \cite[Lemma 1.2]{Top}, and see that one can choose $\delta$ sufficiently small so that $v(r) = \delta r^m$ satisfies the reverse inequality \eqref{ODEInequality}. Noting that $\lim_{r \to 0} \frac{V(x,r)}{v(r)} = \frac{\omega_m}{\delta}$, we choose $\delta < \min(\omega_m, \delta_0)$ where $\delta_0$ is the smallest positive value satisfying 
\[
m \delta_0 + m \delta^{\frac{2m-3}{m-1}}_0 - c(m)^{-1} \delta^{\frac{m-1}{m}}_0 = 0
\]
With this choice of $\delta = \delta(m)$ and the ODE inequalities for $V(x,r)$ and $v(r)$, we conclude that  $V(x,r) > v(r)$ 
%(\textcolor{red}{Actually I'm confused why this is the supremum here, and not infinum? For the ODE comparison principle})
so that 
\[
\kappa(x,R_x) = \inf_{r \in (0,R_x]} \frac{V(x,r)}{r^m} > \delta
\]
%
\begin{comment}
\red{J: here, the choice of $R$ depends on $x$ right? What if $R < R_0$, our universal constant from the conditions of Hoffman Spruck? Then it seems that we conclude 
\[
V(x,R) > \delta R^m
\]
but we could still have 
\[
V(x,R) < \delta R_0^m
\]
?}
\end{comment}
%
\end{proof}
\noindent Now, for every point $x \in M$, one of the two properties from Lemma~\ref*{lem:dichotomy} holds. 
%If the first property holds, we can find some radius $r = r_x \in (0, \overline{R}(M)]$ so that the geodesic ball centered at $x$ has the property: $$  r^{-\frac{1}{m-1}} \big( V(x,r) \big)^{-\frac{m-2}{m-1}} \int_{B(x,r)} |H| d \mu > \delta. $$
%
If the second property is true, then we can choose a ball centered at $x$ with the following properties:
\begin{enumerate}
\item Its radius is $r_x \leq r_0$, and
\item $V(x,r_x) \geq \delta r_x^m $
\end{enumerate}
We now ``post-process" the balls in the second case:
\begin{lem} \label{postProcess}
% there exists $r_x \in (0, \overline{R}(M)]$ such that $\kappa(x, r_x) > \delta$.
Suppose $x \in M$ and $B(x, r_x)$ only satisfies the second case of Lemma \ref{lem:dichotomy}. Then there exists a radius $\tilde{r}_x \in [r_x, r_0]$ such that at least one of the following holds
\begin{enumerate}
    \item $$ \tilde{r}_x^{-\frac{1}{m-1}} \big( V(x,\tilde{r}_x) \big)^{-\frac{m-2}{m-1}} \int_{B(x,\tilde{r}_x)} |H| d \mu > \delta $$
\item $\textrm{Vol}(x, \tilde{r}_x) \geq \min(\delta r_0^m, V_0) = V_{0, \delta}$
\end{enumerate}
\end{lem}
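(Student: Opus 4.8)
The plan is to start from the single ball $B(x,r_x)$ with $V(x,r_x)\ge \delta r_x^m$ and $r_x\le r_0$, and try to grow the radius. If along the way we ever satisfy case (1) of the lemma — i.e. the normalized $L^1$-mean-curvature quantity $M(x,\rho)$ exceeds $\delta$ at some $\rho\in[r_x,r_0]$ — then we are done by taking $\tilde r_x=\rho$. So I would assume for contradiction (or rather, for the remaining case) that $M(x,\rho)\le\delta$ for every $\rho\in[r_x,r_0]$, and show that then case (2) holds at $\tilde r_x = r_0$, or more precisely at the largest admissible radius. The point is that once $M(x,\cdot)\le\delta$ on the relevant range, the same differential-inequality machinery from Lemma~\ref{lem:dichotomy} applies: with $\operatorname{supp} h = B(x,\rho)$ and the cutoff $f$ as in that proof, Hoffman–Spruck (Theorem~\ref{thm:HS}) gives
\[
\frac{dV(x,\rho)}{d\rho} + \delta\,\rho^{1/(m-1)}V(x,\rho)^{(m-2)/(m-1)} - c(m)^{-1}V(x,\rho)^{(m-1)/m} \ge 0
\]
wherever the Hoffman–Spruck hypotheses hold for $B(x,\rho)$.

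The key steps, in order, are as follows. First, let $\rho^*\in[r_x,r_0]$ be the largest radius for which the Hoffman–Spruck hypotheses are met with $\operatorname{supp} h=B(x,\rho^*)$ (with $\beta=1/2$); by the definition of $V_0$ in \eqref{VNoughtDef}, these hypotheses hold precisely as long as $V(x,\rho)\le V_0$, so either $\rho^*=r_0$ or $V(x,\rho^*)=V_0$. In the latter case $\textrm{Vol}(x,\rho^*)=V_0\ge V_{0,\delta}$ and we are done with $\tilde r_x=\rho^*$, so assume $\rho^*=r_0$ and $V(x,\rho)\le V_0$ throughout $[r_x,r_0]$. Second, on $[r_x,r_0]$ we have (in the remaining case) $M(x,\rho)\le\delta$, so the differential inequality above holds on this interval. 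Third, compare $V(x,\rho)$ against $v(\rho)=\delta\rho^m$ exactly as in Lemma~\ref{lem:dichotomy}: with the already-fixed $\delta=\delta(m)$, the function $v$ satisfies the reverse inequality, and since at the left endpoint $V(x,r_x)\ge\delta r_x^m = v(r_x)$, an ODE-comparison argument propagates $V(x,\rho)\ge v(\rho)=\delta\rho^m$ forward to $\rho=r_0$. Hence $\textrm{Vol}(x,r_0)\ge\delta r_0^m\ge V_{0,\delta}$, and taking $\tilde r_x=r_0$ gives case (2).

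The main obstacle — and the one subtlety not already handled in Lemma~\ref{lem:dichotomy} — is the direction of the ODE comparison. In the earlier lemma the comparison was run near $\rho\to0$, where $V(x,\rho)/\rho^m\to\omega_m/\delta>1$ forces $V>v$ on a small interval; here I need to propagate the inequality \emph{forward} from $\rho=r_x$, starting from the boundary equality (or inequality) $V(x,r_x)\ge v(r_x)$. I would phrase this as: if $V(x,\rho)$ were to drop to $v(\rho)$ at some first radius $\rho_1\in(r_x,r_0]$, then at $\rho_1$ we would have $V=v$, $V'\le v'$, yet $V$ satisfies $\ge 0$ and $v$ satisfies $\le 0$ in the differential expression $\tfrac{d}{d\rho}(\cdot)+\delta\rho^{1/(m-1)}(\cdot)^{(m-2)/(m-1)} - c(m)^{-1}(\cdot)^{(m-1)/m}$, which evaluated at $\rho_1$ with the common value $v(\rho_1)$ gives $V'(\rho_1)\ge v'(\rho_1)$ — a contradiction unless the two stay equal, and one checks the strict version of the reverse inequality for $v$ (using $\delta<\min(\omega_m,\delta_0)$ strictly) rules this out. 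A minor additional point is the regularity of $\rho\mapsto V(x,\rho)$: it is monotone, hence differentiable a.e., and the inequality $V'\ge$ (the rest) holds in the appropriate a.e./distributional sense coming from the $\mu\to0$ limit in the cutoff argument, which is enough to run the comparison. Once this forward-propagation lemma is in place, the rest is bookkeeping about which of the two stopping mechanisms — hitting $V_0$, or reaching radius $r_0$ — triggers first, and both land in case (2) with the volume lower bound $V_{0,\delta}=\min(\delta r_0^m,V_0)$.
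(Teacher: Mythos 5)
Your proposal is correct and follows essentially the same route as the paper: identify the largest radius up to which the Hoffman--Spruck hypotheses hold (equivalently, where $V(x,\cdot)$ first reaches $V_0$), stop there with volume $V_0$ if that happens before $r_0$, and otherwise run the ODE comparison against $v(\rho)=\delta\rho^m$ out to $r_0$. The only (immaterial) difference is that the paper reruns the comparison from $\rho\to 0$ using $V(x,\rho)/v(\rho)\to\omega_m/\delta>1$, whereas you propagate forward from $\rho=r_x$ using $V(x,r_x)\ge\delta r_x^m$; both land on $V(x,r_0)\ge\delta r_0^m$.
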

\begin{proof}
Suppose the first case in the lemma does not hold. Because $B(x,r_x)$ only satisfies the second case of Lemma \ref{lem:dichotomy}, then we conclude
\[
M(x, r_0) = \sup_{r \in (0,r_0]} r^{-\frac{1}{m-1}} \big( V(x,r) \big)^{-\frac{m-2}{m-1}} \int_{B(x,r)} |H| d \mu \leq \delta 
\]
Now let $R_x$ be the radius such that $Vol(x, R_x) = V_0$ as in equation \eqref{VNoughtDef}. If $R_x > r_0$, then the same ODE argument as in Lemma \ref{lem:dichotomy} allows us to conclude that $V(x, r) > \delta r^m$ for all $r \in [0, r_0]$, and so 
\[
V(x, r_0) > \delta r_0^m
\]
setting $\tilde{r}_x = r_0$ gives the second case of our lemma. If $R_x \leq r_0$, we set $\tilde{r}_x = R_x$ and conclude $V(x, \tilde{r}_x) = V_0$.
\end{proof}
\begin{rem}
Lemma \ref{postProcess} tells us that we can always upgrade the balls provided by Lemma \ref{lem:dichotomy} to potentially larger balls so that either the mean curvature bound holds, or the ball has ``large" volume. 
%We also note that in the statement of lemma \ref{postProcess}, we can replace $\overline{R}(M)$ with $r_0$ (our universal lower bound for the injectivity radius on $N$), since $\overline{R}(M) \geq r_0$ by definition.
\end{rem}
\section{Proof of Theorem \ref{thm:main}}
We can now prove Theorem~\ref*{thm:main}.  Fix $a$ and $b$ in $M$, and choose a length minimizing geodesic $\gamma$ in $M$ which connects $a$ and $b$. Cover $\gamma$ with balls coming from Lemma \ref{lem:dichotomy}, which are then potentially replaced ($r_x \mapsto r_x = \tilde{r}_x$) according to Lemma \ref{postProcess}. This collection of balls cover $\gamma$. Since $\gamma$ is compact, there is a finite subset which covers $\gamma$.  Since the curve is a geodesic, each ball intersects it in an open interval.
This is because we are assuming that $r_x \leq r_0$, and so the shortest point from $x$ to a point on the boundary of
the geodesic ball of radius $r_x$ centered at $x$ entirely lies in
that ball.

Furthermore, we note that if $x$ and $y$ are points on the curve,
and if $B_x$ and $B_y$ are geodesic balls centered at $x$ and $y$ (respectively), then we cannot have both
$$ \overline{B_x} \cap \overline{B_y} \neq \emptyset$$ and $$ \overline{B_x} \cap \overline{B_y} \cap \gamma = \emptyset. $$
By the finite version of the Vitali Covering Lemma, there is a disjoint subset of these balls so that their triples cover $\gamma$.  Thus, the disjoint balls cover at least $1/10$ of the length of $\gamma$, call this collection $\{B(x_i, r_i)\}$. \newline 
\indent We now divide up the balls into two categories; ones that come from the first property and ones that come from the second property of lemma \ref{postProcess}. For the ones
that come from the first property, by the same argument as in \cite[Thm 1]{Top} (choose $\lambda = 1/10$ in the proof), the sum of their radii is bounded above by 
$20 \delta^{1-m} \int_M |H|^{m-1} d \mu = c(m) \int_M |H|^{m-1} d \mu$, where we emphasize that the constant is independent of the sectional curvature bound. \newline 
\indent For the second collection, each ball is disjoint and has volume at least $V_{0, \delta}$.  Since they are all disjoint, there can only be at most $$ \frac{\textrm{Vol}(M)}{V_{0, \delta}}, $$ and each has radius $r_x \leq r_0$.  Thus, the total length of $\gamma$ covered by these
balls is bounded above by
$$ r_0 \frac{\textrm{Vol}(M)}{V_{0, \delta}}.$$
\noindent Since $\delta = \delta(m)$ and $V_{0,\delta} = C(m, k_0, r_0)$ just depend on the geometry of $N$, this completes the proof.

\section{Proof of Theorem \ref{thm:boundary}}
To start the proof, we prove an analogous version of \ref{lem:dichotomy}, but in the setting of non-empty boundary.
\begin{lem}
\label{lem:dichotomyBoundary}
As before, let $M$ and $N$ be $m$- and $n$-dimensional Riemannian manifolds, respectively and $\partial M \neq \emptyset$.
%(\red{Again, need to think about compactness of $M$ here. Does it work as long as $\partial M$ compact?}).
%Let $d > 0$ fixed (\red{J: I don't think $d$ is ever used}).  
If $M$ is isometrically immersed in $N$, and if $N$ is complete with uniformly bounded sectional curvature $\sec \leq b^2$, then there exists a constant $\delta = \delta(m) > 0$ so that for all $x \in M$, there exists a $R_x \in (0, \min(r_0,\textrm{dist}(x,\partial M))]$ and at least one of the following is true:
\begin{enumerate}
\item $M(x,R_x) > \delta$
\item $\kappa(x,R_x) > \delta$
\end{enumerate}
%Here, $R_0 = R_0(m, b)$ is a positive constant dependent on the geometry of $N$ only, and 
Here $\textrm{dist}(x,\partial M)$ is the distance from $x$ to the boundary of $M$.
\end{lem}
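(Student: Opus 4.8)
The plan is to mimic the proof of Lemma~\ref{lem:dichotomy} verbatim, tracking where the assumption that $M$ is closed (rather than having boundary) was used, and arranging the construction so that the cutoff function still has support avoiding $\partial M$. The only place the closedness of $M$ entered the earlier argument was in guaranteeing that the Lipschitz test functions $f$ (supported in $B(x,r+\mu)$) vanish on $\partial M$, as required by the Hoffman--Spruck inequality (Theorem~\ref{thm:HS}). So the natural fix is to restrict the radii we consider to be at most $\textrm{dist}(x,\partial M)$: then every intrinsic ball $B(x,r)$ with $r \le \textrm{dist}(x,\partial M)$ is disjoint from $\partial M$, and a test function supported in such a ball automatically satisfies the boundary hypothesis of Theorem~\ref{thm:HS}.

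Concretely, fix $x \in M$ and set $d_x = \textrm{dist}(x,\partial M)$. Suppose, as before, a threshold $\delta = \delta(m) > 0$ to be chosen, and suppose $M(x,r) \le \delta$ for some $r \in (0, \min(r_0, d_x)]$; otherwise property (1) already holds at such a radius and we are done. Let $R_x$ be the largest value in $(0, \min(r_0, d_x)]$ for which $M(x,R_x) \le \delta$ and the scale-invariant hypotheses of Theorem~\ref{thm:HS} hold with $\textrm{supp}\, h = B(x,R_x)$ and $\beta = 1/2$ (this set of admissible radii is nonempty and closed, hence has a maximum, since the volume constraint is an upper bound on $\textrm{Vol}(\textrm{supp}\, h)$ and both sides are continuous and increasing in $r$, while $d_x \le \overline{R}(M)$ ensures the injectivity-radius condition is met). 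For $r < R_x$ and $\mu$ small enough that $r + \mu \le R_x \le d_x$, the function
\[
f(y) = \begin{cases} 1 & y \in B(x,r) \\ 1 - \tfrac{1}{\mu}(\textrm{dist}_M(x,y) - r) & y \in B(x,r+\mu)\setminus B(x,r) \\ 0 & y \notin B(x,r+\mu) \end{cases}
\]
is Lipschitz, compactly supported in the interior of $M$, and in particular vanishes on $\partial M$; thus Theorem~\ref{thm:HS} applies and yields, exactly as before,
\[
V(x,r)^{(m-1)/m} \le c(m)\left[ \frac{dV(x,r)}{dr} + \|H\|_{L^1(B(x,r))} \right]
\]
after sending $\mu \to 0$, and hence the differential inequality~\eqref{ODEInequality}. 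The ODE comparison with $v(r) = \delta r^m$ and the choice of $\delta = \delta(m)$ are then identical to Lemma~\ref{lem:dichotomy}, giving $V(x,r) > \delta r^m$ for all $r \in (0,R_x]$, so $\kappa(x,R_x) > \delta$ and property (2) holds.

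I do not expect a serious obstacle here; the statement is essentially a bookkeeping variant of Lemma~\ref{lem:dichotomy}. The one point that deserves care is the existence of a \emph{largest} admissible $R_x$: one must check that the constraint set $\{ r \in (0,\min(r_0,d_x)] : M(x,r)\le\delta \text{ and Theorem~\ref{thm:HS} hypotheses hold for } B(x,r)\}$ is closed and nonempty. Nonemptiness follows from the standing assumption $M(x,r)\le\delta$ for some such $r$ together with the fact that small balls trivially satisfy the volume/injectivity constraints; closedness follows from continuity of $r \mapsto V(x,r)$ and of $r \mapsto \int_{B(x,r)}|H|$, noting that the supremum defining $M(x,r)$ is over $(0,r]$ so the condition $M(x,r)\le\delta$ is preserved under decreasing limits and, combined with the closed volume constraint, under the relevant increasing limits as well. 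Everything else — the Sobolev estimate, the passage to the ODE inequality, the comparison argument, and the value of $\delta(m)$ — transfers without change, since none of those steps ever referenced points of $\partial M$ once the test function is supported away from the boundary. (It is worth recording that this same restriction $R_x \le \textrm{dist}(x,\partial M)$ is exactly why the final bound in Theorem~\ref{thm:boundary} must carry the extra $d_{int}(\partial M)$ and $\textrm{Vol}(M)^{1/m}$ terms: near $\partial M$ the balls from this lemma are forced to be small, and the covering argument can only reach within $O(1)$-many small balls of the boundary.)
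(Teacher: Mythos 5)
Your proposal is correct and follows essentially the same route as the paper, whose entire proof is the observation that the argument of Lemma~\ref{lem:dichotomy} goes through verbatim once the ball radii are capped by $\textrm{dist}(x,\partial M)$ so that the cutoff function vanishes on $\partial M$ and Theorem~\ref{thm:HS} still applies. Your additional care about the existence of a largest admissible $R_x$ is a reasonable elaboration of the same argument, not a different approach.
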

\begin{proof}
The proof of Lemma \ref{lem:dichotomyBoundary} is identical to \ref{lem:dichotomy} as long as we take our ball radii smaller than $\text{dist}(x, \partial M)$ in order to avoid the boundary.
\end{proof}  
\noindent We similarly prove a ``post-processing" lemma for the boundary case
\begin{lem} \label{postProcessBoundary}
% there exists $r_x \in (0, \overline{R}(M)]$ such that $\kappa(x, r_x) > \delta$.
Suppose $x \in M$, $\textrm{dist}(x, \partial M) \geq r_0$, and $B(x, r_x)$ only satisfies the second case of Lemma \ref{lem:dichotomy}. Then there exists a radius $\tilde{r}_x \in [r_x, r_0]$ such that at least one of the following holds
\begin{enumerate}
    \item $$ \tilde{r}_x^{-\frac{1}{m-1}} \big( V(x,\tilde{r}_x) \big)^{-\frac{m-2}{m-1}} \int_{B(x,\tilde{r}_x)} |H| d \mu > \delta $$
\item $\textrm{Vol}(x, \tilde{r}_x) \geq \min(\delta r_0^m, V_0) = V_{0, \delta}$
\end{enumerate}
\end{lem}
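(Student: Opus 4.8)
The plan is to mirror the proof of Lemma~\ref{postProcess} verbatim, simply checking that every step stays within the ball of radius $r_0$ around $x$, which is now guaranteed to avoid $\partial M$ by the extra hypothesis $\mathrm{dist}(x,\partial M) \geq r_0$. First I would assume the first alternative fails. Since $B(x,r_x)$ only satisfies the second case of Lemma~\ref{lem:dichotomyBoundary} (this is what ``only satisfies the second case of Lemma~\ref{lem:dichotomy}'' should be read as in the boundary setting), and since we are assuming the first alternative of the present lemma fails for all radii in $(0,r_0]$, we conclude
\[
M(x,r_0) = \sup_{r \in (0,r_0]} r^{-\frac{1}{m-1}}\big(V(x,r)\big)^{-\frac{m-2}{m-1}} \int_{B(x,r)} |H|\, d\mu \leq \delta.
\]
The key point is that every radius $r \le r_0 \le \mathrm{dist}(x,\partial M)$, so the intrinsic balls $B(x,r)$ are genuine geodesic balls not meeting $\partial M$, and the Hoffman--Spruck inequality (Theorem~\ref{thm:HS}) applies to cutoff functions supported in them exactly as before.

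Next I would run the same ODE comparison argument as in Lemma~\ref{lem:dichotomy}: let $R_x$ be the radius with $V(x,R_x) = V_0$ from equation~\eqref{VNoughtDef}. If $R_x > r_0$, then on all of $[0,r_0]$ the volume satisfies the differential inequality \eqref{ODEInequality} with the bound $\|H\|_{L^1(B(x,r))} \le \delta r^{1/(m-1)} V(x,r)^{(m-2)/(m-1)}$, and comparison with $v(r) = \delta r^m$ gives $V(x,r) > \delta r^m$ for $r \in [0,r_0]$; in particular $V(x,r_0) > \delta r_0^m$, and setting $\tilde r_x = r_0$ yields the second alternative with $V_{0,\delta} = \min(\delta r_0^m, V_0)$. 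If instead $R_x \le r_0$, then $R_x \le \mathrm{dist}(x,\partial M)$ as well, so $B(x,R_x)$ avoids the boundary and $V(x,R_x) = V_0 \ge V_{0,\delta}$; set $\tilde r_x = R_x$. In both cases $\tilde r_x \in [r_x, r_0]$ and the second alternative holds, completing the proof.

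I do not expect any genuine obstacle here: the content is entirely the same as Lemma~\ref{postProcess}, and the only new ingredient is the bookkeeping observation that the assumption $\mathrm{dist}(x,\partial M) \ge r_0$ makes every ball used in the argument interior, so that the closed-manifold estimates (the Hoffman--Spruck inequality on cutoffs and the ODE comparison for $V(x,r)$) carry over without modification. The one thing to be slightly careful about is the phrase ``only satisfies the second case of Lemma~\ref{lem:dichotomy}'' in the hypothesis; I would interpret it as referring to Lemma~\ref{lem:dichotomyBoundary} (with $R_x \le \min(r_0, \mathrm{dist}(x,\partial M)) = r_0$ here), and would note explicitly that under $\mathrm{dist}(x,\partial M) \ge r_0$ the two dichotomy lemmas have identical statements on the relevant range of radii. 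This ball of balls is then fed into the proof of Theorem~\ref{thm:boundary} in the same way the closed case uses Lemma~\ref{postProcess}, with the extra $\mathrm{Vol}(M)^{1/m}$ term accounting for the layer of points within distance $r_0$ of $\partial M$ and the $d_{int}(\partial M)$ term handling the geodesic's passage near the boundary.
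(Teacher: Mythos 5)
Your proposal is correct and follows essentially the same route as the paper: the authors simply note that the proof of Lemma~\ref{postProcess} carries over verbatim because the hypothesis $\mathrm{dist}(x,\partial M) \geq r_0$ keeps every ball of radius at most $r_0$ away from $\partial M$, which is precisely the bookkeeping you spell out (including the reading of the hypothesis as referring to Lemma~\ref{lem:dichotomyBoundary}). Nothing further is needed.
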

\begin{proof}
The proof is exactly the same as lemma \ref{postProcess} since we assume $\textrm{dist}(x, \partial M) \geq r_0$.
\end{proof}
\noindent We now prove Theorem \ref{thm:boundary}. WLOG, we assume that $\text{diam}(\partial M) < \infty$, and hence $\partial M$ is compact. Fix $x \in M$, and choose $y \in \partial M$ so that:
$$ \textrm{dist}(x,\partial M) = \textrm{dist}(x,y).$$  Such
a $y$ exists since $\partial M$ is closed, and $M$ is connected. We show the following
%
%(\red{Want to rephrase this so we have a lemma which says $\text{dist}(x, \partial M)$ is bounded by our integral + volume}) \newline 
%
\begin{lem} \label{distToBoundary}
Let $\gamma$ be a distance minimizing curve from $x \in \mathring{M}$ to $\partial M$. Then 
\[
\ell(\gamma) \leq c(m) \Big(\int_M |H|^{m-1} + \frac{r_0}{V_{0,\delta}} \textrm{Vol}(M) + \textrm{Vol}(M)^{1/m}\Big) 
\]    
\end{lem}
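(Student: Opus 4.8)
The plan is to run the covering argument from the proof of Theorem~\ref{thm:main}, but along the distance-minimizing curve $\gamma$ from $x$ to $\partial M$ rather than along a geodesic between two interior points. First I would split $\gamma$ into two pieces: the portion $\gamma_1$ consisting of points $z$ with $\textrm{dist}(z,\partial M) \geq r_0$, and the portion $\gamma_2$ consisting of points with $\textrm{dist}(z,\partial M) < r_0$. The second piece is easy: since $\gamma$ is distance-minimizing to the boundary, once it enters the $r_0$-neighborhood of $\partial M$ it can never leave it again (the tail of a minimizing segment to $\partial M$ is itself minimizing), so $\ell(\gamma_2) \leq r_0 \leq c(m)\,\textrm{Vol}(M)^{1/m}$ up to adjusting constants — or more carefully, one bounds $\ell(\gamma_2)$ directly by $r_0$, which is absorbed into the $\textrm{Vol}(M)^{1/m}$ term after comparing $r_0$ with $V_{0,\delta}^{1/m}$ and hence with a dimensional multiple of $\textrm{Vol}(M)^{1/m}$ when $\textrm{Vol}(M) \geq V_{0,\delta}$; in the degenerate case $\textrm{Vol}(M) < V_{0,\delta}$ one handles things separately since then $M$ has controlled volume and the whole argument simplifies.

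For the main piece $\gamma_1$, every point $z$ on it satisfies $\textrm{dist}(z,\partial M) \geq r_0$, so Lemma~\ref{lem:dichotomyBoundary} applies with the full radius budget up to $r_0$, and then Lemma~\ref{postProcessBoundary} applies verbatim (its hypothesis $\textrm{dist}(x,\partial M)\geq r_0$ is exactly what we have on $\gamma_1$). So I would cover $\gamma_1$ by the balls $B(z,\tilde r_z)$ produced by these two lemmas, extract a finite subcover, and apply the finite Vitali covering lemma to get a disjoint subcollection $\{B(x_i,r_i)\}$ whose triples cover $\gamma_1$; hence $\sum_i r_i \geq \tfrac{1}{10}\ell(\gamma_1)$. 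Exactly as in the proof of Theorem~\ref{thm:main}, I split these disjoint balls into those satisfying the first (mean-curvature) alternative of Lemma~\ref{postProcessBoundary} and those satisfying the second (large-volume) alternative. The first family contributes $\sum r_i \leq c(m)\int_M |H|^{m-1}\,d\mu$ by Topping's estimate \cite[Thm 1]{Top}, and the second family has at most $\textrm{Vol}(M)/V_{0,\delta}$ members each of radius $\leq r_0$, contributing $\leq r_0\,\textrm{Vol}(M)/V_{0,\delta}$. Combining, $\ell(\gamma_1) \leq c(m)\big(\int_M|H|^{m-1} + \tfrac{r_0}{V_{0,\delta}}\textrm{Vol}(M)\big)$, and adding the $\gamma_2$ bound gives the claimed inequality.

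The one point that needs genuine care — and which I expect to be the main obstacle — is the geometric claim that once the minimizing curve $\gamma$ enters $\{\textrm{dist}(\cdot,\partial M) < r_0\}$ it stays there, so that $\gamma_1$ is an honest initial segment on which the boundary-distance hypothesis holds uniformly, and $\gamma_2$ is a single terminal segment of length at most $r_0$. This follows because if $\gamma:[0,L]\to M$ is parametrized by arclength with $\gamma(L)\in\partial M$ realizing $\textrm{dist}(x,\partial M)$, then for any $t$ the restriction $\gamma|_{[t,L]}$ is a minimizing curve from $\gamma(t)$ to $\partial M$, so $\textrm{dist}(\gamma(t),\partial M)=L-t$ is monotone decreasing in $t$; thus $\gamma_2 = \gamma|_{[L-r_0,L]}$ (if $L>r_0$) has length exactly $r_0$, and on $\gamma_1=\gamma|_{[0,L-r_0]}$ we have $\textrm{dist}(\gamma(t),\partial M)=L-t\geq r_0$ as needed. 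The remaining bookkeeping — absorbing $r_0$ into the $\textrm{Vol}(M)^{1/m}$ term, and checking that the disjoint balls meeting $\gamma_1$ all have $\textrm{dist}(x_i,\partial M)\geq r_0$ even though they may be centered slightly off $\gamma$ (here one uses $r_i \leq r_0$ together with the fact that $B(x_i,r_i)$ meets $\gamma_1$, so one actually wants the centers on $\gamma_1$, which is automatic since the covering balls in Lemma~\ref{lem:dichotomyBoundary} are centered at points of the curve) — is routine and follows the template of the proof of Theorem~\ref{thm:main}.
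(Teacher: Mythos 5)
Your treatment of $\gamma_1$ (the far-from-boundary portion) matches the paper's argument, and your monotonicity observation that $\textrm{dist}(\gamma(t),\partial M)=L-t$ is correct and is also what the paper implicitly uses. The genuine gap is in your treatment of $\gamma_2$. The bound $\ell(\gamma_2)\leq r_0$ is true but cannot be absorbed into $c(m)\,\textrm{Vol}(M)^{1/m}$: that would require $r_0^m \leq c(m)\,\textrm{Vol}(M)$, which fails whenever $M$ has small volume relative to $r_0$ (e.g.\ a thin capped tube of length comparable to $r_0$, where the true bound is carried by the mean curvature term, not the volume term). The comparison $r_0 \lesssim V_{0,\delta}^{1/m}$ you invoke also fails when $V_{0,\delta}=V_0 < \delta r_0^m$, i.e.\ when the curvature constraint rather than the injectivity radius determines $V_0$. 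And in your ``degenerate case'' $\textrm{Vol}(M)<V_{0,\delta}$ nothing simplifies in the direction you need: you still must bound $\min(L,r_0)$, and covering $\gamma_2$ by dichotomy balls does not help naively, because the $\kappa$-balls near the boundary have radii only $\leq \textrm{dist}(\cdot,\partial M)$, and a sum of many small radii $r_j \leq (V(x_j,r_j)/\delta)^{1/m}$ is \emph{not} controlled by $(\textrm{Vol}(M)/\delta)^{1/m}$ (the inequality $\sum a_j^{1/m} \geq (\sum a_j)^{1/m}$ goes the wrong way). So as written your proposal never actually produces the $\textrm{Vol}(M)^{1/m}$ term from a geometric estimate.

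The missing idea, which is the heart of the paper's proof, is the following. Cover all of $\gamma$ (up to an $\epsilon$-collar) by dichotomy balls, choosing for each center the \emph{largest} admissible radius, and run Vitali. The balls failing both the mean-curvature alternative and the volume alternative necessarily have $\textrm{dist}(x_i,\partial M)<r_0$. Among these, take the one whose center is farthest from $\partial M$: maximality of $r_i$ plus the ODE argument of Lemma~\ref{lem:dichotomy} force $r_i=\textrm{dist}(x_i,\partial M)$ and $\kappa(x_i,r_i)\geq\delta$. That ball then contains the entire tail of $\gamma$ beyond $x_i$ (by your own monotonicity observation), so by disjointness there is \emph{at most one} such exceptional ball, and its radius satisfies $r_i\leq (V(x_i,r_i)/\delta)^{1/m}\leq (\textrm{Vol}(M)/\delta)^{1/m}$. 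This single-ball estimate is what yields the $\textrm{Vol}(M)^{1/m}$ term; without it your argument does not close.
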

\noindent We note that Lemma \ref{distToBoundary} resolves theorem \ref{thm:boundary}, for if $x$ and $z$ are points in $M$, then we can form a path from one to the other
by choosing a length minimizing curve, $\alpha$, from $x$ to $\partial M$, choosing a length minimizing curve, $\beta$, from
$z$ to $\partial M$, and then joining the two points on $\partial M$ by a length minimizing curve. This uses that $\partial M$ is connected (though lemma \ref{distToBoundary} does not require connectedness) This yields the bound:
$$ \textrm{dist}(x,z) \leq \textrm{length}(\alpha) + \textrm{length}(\beta) + \textrm{diam}(\partial M) $$
which yields theorem \ref{thm:boundary}.

We return to the curve $\gamma$ which connects $x$ to $\partial M$ and is length minimizing.  To bound the length of $\gamma$, 
consider
\[
M_{\epsilon} = \{x \in M \; | \; \text{dist}(x, \partial M) \geq \epsilon \}
\]
which is well defined for all $\epsilon$ sufficiently small. We cover $M_{\epsilon} \cap \gamma$ by 
\[
M_{\epsilon} \cap \gamma \subseteq  \bigcup_{x \in M_{\epsilon} \cap \gamma} B(x,r_x)
\]
where the collection of balls $\{B(x, r_x)\}$ are chosen according to lemma \ref{lem:dichotomyBoundary}, refining those with $\textrm{dist}(x, \partial M) \geq r_0$ with lemma \ref{postProcessBoundary}. Moreover, for each such $x$, choose $r_x$ to be the \textit{largest} such ball given by lemmas \ref{lem:dichotomyBoundary} \ref{postProcessBoundary}. As in the proof of theorem \ref{thm:main}, each of these balls intersects $\gamma$ in an interval, and we cannot have both
$$ \overline{B_x} \cap \overline{B_y} \neq \emptyset$$ and $$ \overline{B_x} \cap \overline{B_y} \cap \gamma = \emptyset. $$  Since these balls
cover $\gamma$, we use the 
Vitali Covering Lemma to find a finite collection of disjoint balls, $\{B(x_i, r_i)\}$, which cover at least $\ell(\gamma)/10$. \newline 
\indent Consider all balls for which $\text{Vol}(B(x_{i}, r_{i})) > V_{0,\delta}$ \textbf{or} $M(x_{i}, r_{i}) > \delta$, and call this subcollection $\{B(x_{i_j}, r_{i_j})\}$. Note that this subcollection contains all balls $\{B(x_{i}, r_{i})\}$ which have $\text{dist}(x_{i}, \partial M) \geq r_0$ as lemmas \ref{lem:dichotomyBoundary} \ref{postProcessBoundary} apply.   We now argue exactly as in the proof of theorem \ref{thm:main} and conclude
\[
\sum_{j = 1}^N r_{i_j} \leq c(m) \left[ \int_{M} |H|^{m-1} + \frac{r_0}{V_{0,\delta}} \textrm{Vol}(M)\right]
\]
%\red{Change the above, we could have balls close to boundary which have large volume, so LHS can't actually be $\gamma \cap M_{\overline{R}(M)}$} \nl \nl 
%
We now handle the remaining balls, which necessarily satisfy $\text{dist}(x_i, \partial M) < r_0$, $M(x_i, r_i) \leq \delta$, and $\text{Vol}(B(x_i, r_i)) \leq V_{0,\delta}$. We will show that there is at most one ball satisfying these conditions. Consider, 
%(\red{Choose the one which is farthest from the boundary!}) Choose 
$x_i$ satisfying our conditions \textit{and} such that $\text{dist}(x_i, \partial M)$ is maximized. Further consider the ball $B(x_i, \overline{r}_i)$ with $\overline{r}_i = \text{dist}(x_i, \partial M)$. Note that if $r_i < \overline{r}_i$, then necessarily $M(x_i, \overline{r}_i) \leq \delta$, else $r_i$ wouldn't have been the \textit{largest} radii satisfying one of the conclusions of lemma \ref{lem:dichotomyBoundary} (which we enforced during construction). See figure \ref{fig:oneball} for a visualization.
\begin{figure}[h!]
    \centering
    \includegraphics[scale=0.3]{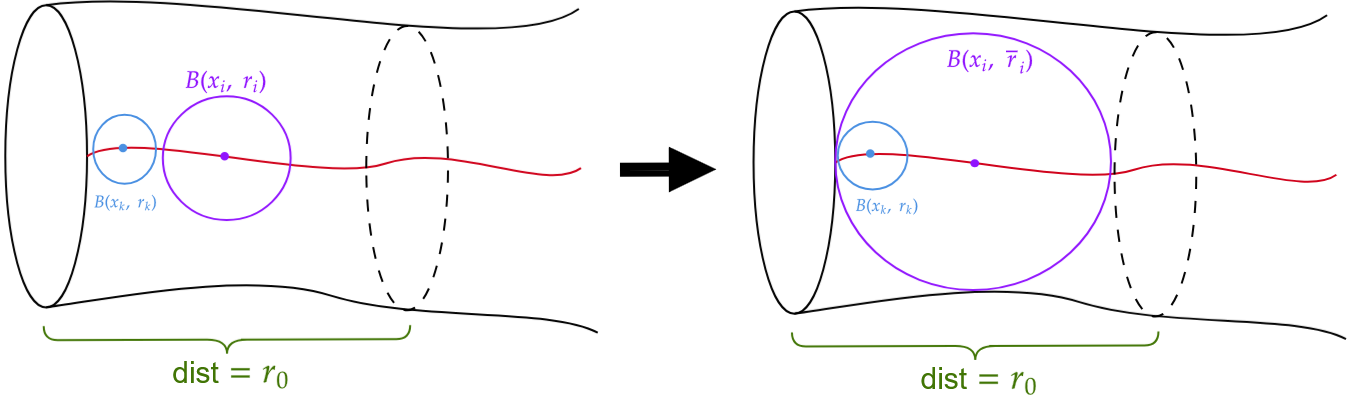}
    \caption{Visualization of one ball argument}
    \label{fig:oneball}
\end{figure}
\newline Now the same ODE argument as in the proof of lemma \ref{lem:dichotomyBoundary} tells us that $\kappa(x_i, \overline{r}_i) > \delta$, and so again, $r_i$ wouldn't have been the maximal choice of radius. Thus $r_i = \overline{r}_i = \text{dist}(x_i, \partial M)$. But now, disjointness of our finite subcover tells us that no other $x_k$ satisfies the three conditions, $\text{dist}(x_k, \partial M) < r_0$, $M(x_k, r_k) \leq \delta$ and $\text{Vol}(B(x_k, r_k)) \leq V_{0,\delta}$, else $B(x_k, r_k) \cap B(x_i, r_i) \neq \emptyset$. \newline 
\indent Thus it suffices to bound $r_i$. Note that $\kappa(x_i, r_i) \geq \delta$, so 
\[
V(x_i, r_i) \geq \delta r_i^m \implies r_i \leq \frac{V(x_i, r_i)^{1/m}}{\delta^{1/m}} \leq \frac{\text{Vol}(M)^{1/m}}{\delta^{1/m}}
\]
%
%\red{Start - have to handle balls which are close. Maybe run the PDE argument for one such ball? Assuming it doesn't satisfy the mean curvature bound. Send the radius to $r = \text{dist}(x, \partial M)$. Note we won't hit the volume constraint of hoffman spruck by assumption, since we already handled this in our first bound}
%
From this, we now conclude
\begin{align*}
\ell(M_{\epsilon} \cap \gamma) &\leq c(m) \left[ \int_{M} |H|^{m-1} + \frac{r_0}{V_{0,\delta}} \textrm{Vol}(M)\right] + \frac{\textrm{Vol}(M)^{1/m}}{\delta^{1/m}} \\
& \leq c(m) \Big( \int_M |H|^{m-1} + \frac{r_0}{V_{0,\delta}} \textrm{Vol}(M) + \textrm{Vol}(M)^{1/m} \Big)
\end{align*}
Sending $\epsilon \to 0$, we conclude the bound on $\ell(\gamma)$. 
%Note that in our preceeding lemmas, we can replace $\overline{R}(M) \to r_0$ to get the appropriate lower bound independent of the geometry of $M$. 
% 
This concludes the proof of lemma \ref{distToBoundary} and setting $C(r_0, k_0, m) = c(m) \frac{r_0}{V_{0,\delta}}$, the proof of theorem \ref{thm:boundary}. 
\section{Applications} \label{Applications}
As an application of Theorem \ref{thm:main} we prove Corollary \ref{finiteDiamCor}:
%note that any submanifold with finite area and bounded curvature must necessarily have finite intrinsic diameter. In particular, suppose that $h: N^{n+1} \to \R$ is a bounded smooth function. Then
%
\begin{cor} 
Suppose that $N^{n+1}$ is a manifold with bounded sectional curvature, $M^m \subseteq N^{n+1}$ is a submanifold with prescribed mean curvature (PMC) $H\Big|_M = h$, $H \in L^{m-1}(M)$, $\textrm{Vol}(M) < \infty$, and $\partial M$ is connected with finite diameter (or is empty). Then $M^n$ has finite diameter.
\end{cor}
\begin{proof}
Consider a sequence of pairs of points $\{(x_i, y_i)\}$ along with distance minimizing geodesics $\gamma_i: x_i \to y_i$ such that 
\[
d_{int}(Y) = \lim_{i \to \infty} \text{dist}(x_i, y_i) = \lim_{i \to \infty} \ell(\gamma_i)
\]
Then recreating the proof of Theorem \ref{thm:main} or Theorem \ref{thm:boundary} shows that 
\[
\ell(\gamma_i) \leq \begin{cases}
C(k_0, r_0, m) \Big(\int_M |H|^{m-1} + \text{Vol}(M)\Big) & \partial M = \emptyset \\
C(k_0, r_0, m) \Big(\int_M |H|^{m-1} + \text{Vol}(M)\Big) + d_{int}(\partial M) & \partial M \neq \emptyset
\end{cases}
\]
this bound is uniform in $i$ so we conclude. 
\end{proof}
\noindent We now prove Corollary \ref{finiteDiamCOne}
\begin{cor} 
%Suppose $(N^{n+1}, g)$ and $g$ a $C^1$ metric. Suppose $M^m \subseteq N^{n+1}$ has bounded mean curvature, finite area, and either empty boundary or a boundary with finite diameter. Then $M$ has finite diameter.
%
Suppose $(N^{n+1}, g)$ is a Riemannian $(n+1)$-manifold with $g$ a $C^1$ metric such that $g \Big|_M$ is $C^2$ outside some compact subset $K \subseteq M$. Suppose $M^m \subseteq N^{n+1}$ has bounded mean curvature and finite area, then it has finite diameter. 
\end{cor}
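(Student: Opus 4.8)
The plan is to reduce the $C^1$ case to the $C^2$ case by smoothing the metric on a neighborhood of $K$ and controlling all the geometric quantities that appear in Theorem~\ref{thm:main}. First I would fix a relatively compact open set $U \subseteq N$ containing $K$ (say a tubular neighborhood of $K$ in $N$) with $\overline{U}$ compact. On $\overline{U}$, mollify $g$ to obtain a family of $C^\infty$ metrics $g^*_\epsilon$ with $g^*_\epsilon \to g$ in $C^1(\overline{U})$ as $\epsilon \to 0$, and interpolate: using a fixed cutoff $\chi$ supported in $U$ and equal to $1$ on $K$, set $\tilde g_\epsilon = \chi g^*_\epsilon + (1-\chi) g$. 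Since $g$ is already $C^2$ on $N \setminus K$, the interpolated metric $\tilde g_\epsilon$ is $C^2$ on all of $N$ (indeed $C^\infty$ near $K$), and $\tilde g_\epsilon \to g$ in $C^1_{loc}(N)$; moreover $\tilde g_\epsilon = g$ outside the fixed compact set $U$.

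The key steps, in order: (1) For a fixed small $\epsilon$, the metric $\tilde g_\epsilon$ is $C^2$ on $N$, so its sectional curvature is a continuous function; since $\tilde g_\epsilon$ differs from $g$ only on the relatively compact set $U$ and $g$ has bounded sectional curvature away from $K$ (being $C^2$ there with, by hypothesis of the corollary as used in Theorem~\ref{thm:main}, a global upper bound $k_0$), one gets $K_{\tilde g_\epsilon} \le k_0' $ for some finite $k_0'$, and $\mathrm{Inj\,Rad}(N, \tilde g_\epsilon) \ge r_0' > 0$ — here one must be slightly careful that injectivity radius does not degenerate, but $C^1$-closeness of metrics on a compact region together with a positive lower bound outside gives a uniform positive lower bound (alternatively invoke the remark after Theorem~\ref{thm:boundary}, needing only $\overline R(M)>0$, and note $M$'s image meets $U$ in a set whose closure is compact). (2) Apply Theorem~\ref{thm:main} to the isometric immersion $M \hookrightarrow (N, \tilde g_\epsilon)$: this gives
\[
d_{int}(M, \tilde g_\epsilon) \le C(r_0', k_0', m)\Big( \int_M |H_{\tilde g_\epsilon}|^{m-1}\, d\mu_{\tilde g_\epsilon} + \mathrm{Vol}_{\tilde g_\epsilon}(M) \Big).
\]
(3) Transfer this bound back to the original metric $g$. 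Since $\tilde g_\epsilon = g$ outside $U$ and $\tilde g_\epsilon$ is $C^1$-close to $g$ on $\overline{U}$, the lengths of curves, the volume measure, and the second fundamental form (hence $H$) computed with respect to $\tilde g_\epsilon$ are all comparable, up to a multiplicative constant close to $1$, to those computed with respect to $g$; in particular $d_{int}(M,g) \le A\, d_{int}(M, \tilde g_\epsilon)$ for a uniform $A$, and $\int_M |H_{\tilde g_\epsilon}|^{m-1} d\mu_{\tilde g_\epsilon} \le A' \int_M |H_g|^{m-1} d\mu_g + A''$ (the additive term absorbing the contribution over the compact set $U \cap M$, where $H_{\tilde g_\epsilon}$ need not equal $H_g$ but is bounded since $M$ has bounded mean curvature in $g$ and $\tilde g_\epsilon$ is $C^1$-close with $C^0$-controlled Christoffel symbols — actually this needs that $H$ changes controllably under the $C^1$ perturbation, which is fine because $H$ depends on one derivative of the metric). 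Combining, $d_{int}(M,g) < \infty$, which is the claim. Note the bound is not quantitative because the choice of smoothing, and hence $k_0', r_0', A, A'$, is not canonical.

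The main obstacle I expect is Step~(3): the mean curvature vector $H$ involves first derivatives of the metric (through the Christoffel symbols and the second fundamental form), so $C^1$-closeness of $\tilde g_\epsilon$ to $g$ does \emph{not} immediately give closeness of $H_{\tilde g_\epsilon}$ to $H_g$ — the derivatives of $\tilde g_\epsilon$ can oscillate. However, one does not need $H_{\tilde g_\epsilon} \to H_g$; one only needs $\int_M |H_{\tilde g_\epsilon}|^{m-1}$ to stay \emph{finite} for the fixed chosen $\epsilon$. On $M \setminus U$ the metrics agree so the integrand is unchanged; on the relatively compact piece $M \cap U$, the immersed submanifold is (the closure is) compact with a fixed $C^2$ metric $\tilde g_\epsilon$, so $H_{\tilde g_\epsilon}$ is automatically bounded there, and $\mathrm{Vol}_{\tilde g_\epsilon}(M \cap U) < \infty$ since $\mathrm{Vol}_g(M) < \infty$ and the measures are comparable. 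Thus the right-hand side of Theorem~\ref{thm:main} applied to $\tilde g_\epsilon$ is finite, giving $d_{int}(M, \tilde g_\epsilon) < \infty$; and since $d_{int}(M,g)$ and $d_{int}(M,\tilde g_\epsilon)$ are comparable (lengths of curves depend only on $g$ pointwise, and $\tilde g_\epsilon \sim g$ uniformly), we conclude $d_{int}(M,g) < \infty$. A secondary technical point to verify carefully is that $\overline U$ can indeed be taken compact — this uses that $K$ is compact and $N$ is a manifold, so a small closed tubular neighborhood works; if $N$ itself is non-compact this is still fine since we only enlarge $K$ by a bounded amount.
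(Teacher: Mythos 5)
Your proposal is correct and follows essentially the same route as the paper: mollify $g$ on a precompact neighborhood $U$ of $K$ to obtain a $C^2$ metric agreeing with $g$ outside a compact set, apply Theorem~\ref{thm:main} to the smoothed metric, and transfer the diameter, volume, and mean-curvature terms back to $g$ using $C^1$-comparability. Your extra observation that one only needs \emph{finiteness} (not quantitative closeness) of $\int_M |H_{\tilde g_\epsilon}|^{m-1}$ on the compact piece is a sensible refinement but does not change the argument's structure.
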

\noindent \textbf{Proof}: Consider a (precompact) open neighborhood $U$ with $K \subseteq U \subseteq N$ and mollify $g$ against a smooth bump function on $U$. Let the resulting metric be $g^*$, and note that $g^*$ is a $C^2$ metric on $N$, agrees with $g$ outside a compact set in $N$, and $||g - g^*||_{C^1(N)} = C_0 < \infty$. 
%(\red{Oooh does such a metric always exist? I guess if $N$ is compact, then via mollification its okay. But if $N$ (and in particular $M$) are non-compact, then this may be tricky. I guess we could also assume that $g$ is nice outside a compact set on $N$}). 
Then we see that
%\red{Need to switch $g$ for $g^*$ in some of the below to make the proof work}.
%
\begin{align*}
\text{d}_{int, g}(M) &\leq (1 + ||g - g^*||_{C^0}) d_{int, g^*}(M) \\
\textrm{Vol}_{g^*}(M) & \leq (1 + ||g - g^*||_{C^0}) \textrm{Vol}_{g}(M)
\end{align*}
Moreover,
\[
||H_{M, g} - H_{M, g^*}||_K \leq C(m, ||g||_{C^1(U)}, ||g^*||_{C^1(U)}) ||g - g^*||_{C^1(U)}
\]
%where $C(m, M)$ is a constant depending on the immersion (\red{embedding? Unclear. Also is this constant dependent on dimension, or $M$'s immersion?}) of $M \subseteq N^{n+1}$, but not on the metric. 
In particular, if $H_{M, g}$ is bounded in $L^{\infty}(M)$, then we see that $H_{M, g^*}$ is as well. We can now compute:
\begin{align*}
d_{int,g}(M) & \leq (1 + ||g - g^*||_{C^0}) d_{int, g^*}(M) \\
& \leq (1 + ||g - g^*||_{C^0}) C(k_0(g^*), r_0(g^*), m) \left( \int_M |H_{M, g^*}|^{m-1} + \text{Vol}_{g^*}(M) \right) \\
& \leq C(k_0(g^*), r_0(g^*), m, ||g||_{C^1(U)}, ||g^*||_{C^1(U)}, ||g - g^*||_{C^1(U)}) \\
& \qquad \qquad \cdot \left( \int_M |H_{M, g}|^{m-1} + \text{Vol}_{g}(M) \right) \\
& < \infty
\end{align*}
having applied theorem \ref{thm:main} with the metric $g^*$. In the case with boundary, we do the analogous proof (assuming that $d_{int, g}(\partial M)$ is finite). \qed 

\bibliographystyle{amsplain}
\bibliography{bibliography}

\providecommand{\bysame}{\leavevmode\hbox to3em{\hrulefill}\thinspace}
\providecommand{\MR}{\relax\ifhmode\unskip\space\fi MR }
% \MRhref is called by the amsart/book/proc definition of \MR.
\providecommand{\MRhref}[2]{%
  \href{http://www.ams.org/mathscinet-getitem?mr=#1}{#2}
}
\providecommand{\href}[2]{#2}
\begin{thebibliography}{10}

\bibitem{chambers2020existence}
Gregory~R Chambers and Yevgeny Liokumovich, \emph{Existence of minimal hypersurfaces in complete manifolds of finite volume}, Inventiones mathematicae \textbf{219} (2020), no.~1, 179--217.

\bibitem{gadgil2011surfaces}
Siddhartha Gadgil and Harish Seshadri, \emph{Surfaces of bounded mean curvature in riemannian manifolds}, Transactions of the American Mathematical Society \textbf{363} (2011), no.~8, 3977--4005.

\bibitem{HS}
D.~Hoffman and J.~Spruck, \emph{Sobolev and isoperimetric inequalities for riemannian submanifolds}, Comm. Pure. Appl. Math. (1974), 715--727.

\bibitem{mazurowski2022prescribed}
Liam Mazurowski, \emph{Prescribed mean curvature min-max theory in some non-compact manifolds}, arXiv preprint arXiv:2204.07493 (2022).

\bibitem{menne2017novel}
Ulrich Menne and Christian Scharrer, \emph{A novel type of sobolev-poincar$\backslash$'e inequality for submanifolds of euclidean space}, arXiv preprint arXiv:1709.05504 (2017).

\bibitem{miura2020diameter}
Tatsuya Miura, \emph{A diameter bound for compact surfaces and the plateau-douglas problem}, arXiv preprint arXiv:2010.02797 (2020).

\bibitem{paeng2014diameter}
Seong-Hun Paeng, \emph{Diameter of an immersed surface with boundary}, Differential Geometry and its Applications \textbf{33} (2014), 127--138.

\bibitem{simon1993existence}
Leon Simon, \emph{Existence of surfaces minimizing the willmore functional}, Communications in Analysis and Geometry \textbf{1} (1993), no.~2, 281--326.

\bibitem{simon1984lectures}
Leon Simon et~al., \emph{Lectures on geometric measure theory}, Centre for Mathematical Analysis, Australian National University Canberra, 1984.

\bibitem{stryker2023localization}
Douglas Stryker, \emph{Localization of min-max minimal hypersurfaces}, arXiv preprint arXiv:2310.08363 (2023).

\bibitem{stryker2024min}
\bysame, \emph{Min-max construction of prescribed mean curvature hypersurfaces in noncompact manifolds}, arXiv preprint arXiv:2409.07330 (2024).

\bibitem{sun2020multiplicity}
Ao~Sun, Zhichao Wang, and Xin Zhou, \emph{Multiplicity one for min-max theory in compact manifolds with boundary and its applications}, arXiv preprint arXiv:2011.04136 (2020).

\bibitem{Top}
P.~M. Topping, \emph{Relating diameter and mean curvature for submanifolds of euclidean space}, Comment. Math. Helv. (2008), 539--546.

\bibitem{wu2023diameter}
Jia-Yong Wu, \emph{Diameter estimates for submanifolds in manifolds with nonnegative curvature}, Differential Geometry and its Applications \textbf{90} (2023), 102048.

\bibitem{wu2011relating}
Jia-Yong Wu and Yu~Zheng, \emph{Relating diameter and mean curvature for riemannian submanifolds}, Proceedings of the American Mathematical Society \textbf{139} (2011), no.~11, 4097--4104.

\end{thebibliography}

\end{document}